    \newif\ifCustomTheorems
\theoremstyle{definition}
\newcommand\RedeclareMathOperator{%
  \@ifstar{\def\rmo@s{m}\rmo@redeclare}{\def\rmo@s{o}\rmo@redeclare}%
}
\newcommand\rmo@redeclare[2]{%
  \begingroup \escapechar\m@ne\xdef\@gtempa{{\string#1}}\endgroup
  \expandafter\@ifundefined\@gtempa
     {\@latex@error{\noexpand#1undefined}\@ehc}%
     \relax
  \expandafter\rmo@declmathop\rmo@s{#1}{#2}}
\newcommand\rmo@declmathop[3]{%
  \DeclareRobustCommand{#2}{\qopname\newmcodes@#1{#3}}%
}
\newcommand{\N}{\mathds{N}}
\newcommand{\R}{\mathds{R}}
\newcommand{\Rp}{\R_{\geq0}}
\newcommand{\Impl}{\Longrightarrow}
\newcommand{\fa}{\ \forall \, }
\newcommand{\ex}{\ \exists \, }
\newcommand{\rbl}{\left (}
\newcommand{\rbr}{\right )}
\newcommand{\al}{\left \langle}
\newcommand{\ar}{\right \rangle}
\newcommand{\bl}{\left |}
\newcommand{\br}{\right |}
\newcommand{\nl}{\left\|}
\newcommand{\nr}{\right\|}
\newcommand{\cbl}{\left\lbrace }
\newcommand{\cbr}{\right\rbrace }
\newcommand{\Abs}[1]{\bl #1 \br}
\newcommand{\Norm}[2][ ]{\nl #2 \nr_{#1}}
\newcommand{\SNorm}[1]{\Norm[\infty]{#1}}
\newcommand{\LNorm}[2][2]{\Norm[L^{#1}]{#2}}
\newcommand{\setdef}[2]{\cbl\ #1\ \left|\ \vphantom{#1} #2\ \right.\cbr}
\newcommand{\GL}{\text{GL}}
\newcommand{\cD}{\mathcal{D}}
\newcommand{\cU}{\mathcal{U}}
\newcommand{\cF}{\mathcal{F}}
\newcommand{\cG}{\mathcal{G}}
\newcommand{\cN}{\mathcal{N}}
\newcommand{\cY}{\mathcal{Y}}
\newcommand{\cT}{\mathcal{T}}
\DeclareMathOperator*{\rf}{ref}
\DeclareMathOperator*{\esssup}{ess\,sup}
\DeclareMathOperator*{\loc}{loc}
\newcommand{\e}{\varepsilon}
\renewcommand{\l}{\lambda}
\newcommand{\me}{\mathrm{e}}
\newcommand{\con}{\mathcal{C}}
\newcommand{\oT}{\mathbf{T}}
\RedeclareMathOperator*{\Im}{Im}
\RedeclareMathOperator*{\Re}{Re}
\renewcommand{\phi}{\varphi}
\renewcommand{\d}{\ \text{d}}
\newcommand{\dd}[2][ ]{\tfrac{\text{\normalfont d}#1}{\text{\normalfont d}#2}}
    \newtheorem{definition}{Definition}[section]
    \theoremstyle{definition}
    \newtheorem{remark}[definition]{Remark}\Crefname{remark}{Remark}{Remarks}
    \newtheorem{algo}[definition]{Algorithm}\Crefname{algo}{Algorithm}{Algorithms}
    \Crefname{example}{Example}{Examples}
    \theoremstyle{plain}
    \Crefname{prop}{Proposition}{Propositions}
    \Crefname{corollary}{Corollary}{Corollaries}
    \Crefname{assertion}{Assertion}{Assertions}
    \newtheorem{theorem}[definition]{Theorem}\Crefname{theorem}{Theorem}{Theorems}
    \newtheorem{lemma}[definition]{Lemma}\Crefname{lemma}{Lemma}{Lemmata}
\begin{document}

\begin{frontmatter}
\title{Funnel MPC for nonlinear systems with arbitrary relative degree\tnoteref{funding}}

\tnotetext[funding]{This work was funded by the Deutsche Forschungsgemeinschaft (DFG, German Research Foundation)~-- Project-ID 471539468.}
\author[1]{Thomas Berger}\ead{thomas.berger@math.upb.de}
\author[1,2]{Dario Dennst\"adt}\ead{dario.dennstaedt@uni-paderborn.de}
\address[1]{Universit\"at Paderborn, Institut f\"ur Mathematik, Warburger Str.~100, 33098~Paderborn, Germany}
\address[2]{Technische Universit\"at Ilmenau, Institut f\"ur Mathematik, Weimarer Stra\ss e 25, 98693 Ilmenau, Germany}

\begin{abstract}
The model predictive control (MPC) scheme funnel MPC enables output tracking of
smooth reference signals with prescribed error bounds for nonlinear multi-input multi-output
systems with stable internal dynamics. Earlier works achieved the control objective for system with relative degree restricted to one or incorporated additional feasibility constraints in the optimal control problem. Here we resolve these limitations by introducing a modified stage cost function relying on a weighted sum of the tracking error derivatives. The weights need to be sufficiently large and we state explicit lower bounds. Under these assumptions we are able to prove initial and recursive feasibility of the novel funnel MPC scheme for systems with arbitrary relative degree~--
without requiring any terminal conditions, a sufficiently long prediction horizon or 
additional output constraints.
\end{abstract}

\begin{keyword}
model predictive control, funnel control, reference tracking,  nonlinear systems, initial feasibility, recursive feasibility
\end{keyword}

\end{frontmatter}

\section{Introduction}
Model predictive control (MPC) is a nowadays widely used control technique which has seen various
applications, see e.g.~\cite{QinBadg03} and also~\cite{samad2020industry}. It is applicable to nonlinear multi-input multi-output
system and able to take state and control constraints directly into account. MPC relies on the
iterative solution of finite horizon optimal control problems (OCP), see
e.g.~\cite{rawlings2017model,grune2017nonlinear}.

Solvability of the OCP at any particular time instance is essential for the successful application of MPC.
Incorporating suitably designed terminal conditions (costs and constraints) in the optimization problem
is an often used method to guarantee \textit{initial and recursive feasibility}, meaning guaranteeing 
that the solvability of the OCP at a particular instance in time automatically implies
that the OCP can be solved at the successor time instance.
However, the computational effort for solving the OCP and finding initially feasible control signals
becomes significantly more complicated by the introduction of such (artificial) terminal conditions.
Thus, the domain of admissible controls for MPC might shrink substantially, see
e.g.~\cite{chen2003terminal,gonzalez2009enlarging}. 
Alternative methods relying on controllability conditions, e.g. cost
controllability~\cite{CoroGrun20}, require a sufficiently long prediction horizon,
see e.g.~\cite{boccia2014stability,EsteWort20}. Especially in the presence of time-varying state and
output constraints these techniques are considerably more involved, see
e.g.~\cite{manrique2014mpc}.

Funnel MPC was proposed in~\cite{berger2019learningbased} to overcome these restrictions. It
allows for output reference tracking such that the tracking error evolves within predefined (time-varying)
performance bounds. 
This is different from the output tracking or output regulation problem on which
other MPC approaches usually focus, see e.g.~\cite{kohler2018nonlinear,limon2018nonlinear}.
Instead of prescribing a transient behavior for the system output, 
the control objective in these works is to achieve asymptotic tracking, i.e., ensuring the convergence of the tracking error to zero.
In~\cite{kohler2021constrained}, the output regulation problem is considered in the presence of time-invariant constraints. 
The constraint satisfaction is ensured by assuming suitable stabilizability and detectability conditions and a sufficiently long
prediction horizon. A similar control objective is pursued by tube-based MPC schemes, see e.g.~\cite{MaynSero05} for linear systems 
and~\cite{FaluMayn14,rakovic2022homothetic,KohlSolo21} for nonlinear systems.
Similar to funnel MPC, the tracking error is confined to a controllable and known range which might change over time.
The intention is to compensate for model uncertainties and disturbances acting on the system.
These methods ensure safe operation by introducing tightening tubes around the input and output constraints
in order to ensure robust satisfaction of the given constraints.
However, these tubes are usually calculated offline and cannot be arbitrarily chosen by the user since they have to encompass the system uncertainties~-- 
see e.g.~\cite{lopez2019dynamic}, where the tubes and the reference trajectory are simultaneously optimized depending on the proximity to the tube boundary.

While the first proposed funnel MPC algorithm in~\cite{berger2019learningbased} incorporated output constraints 
in the OCP, it was shown in the successor work~\cite{BergDenn21} that for a class of systems with
relative degree one and, in a certain sense, input-to-state stable internal dynamics, these
constraints are superfluous. Utilizing a ``funnel-like'' stage cost, which penalizes the tracking
error and becomes infinite when approaching predefined boundaries, guarantees initial and recursive
feasibility~-- without the necessity to impose additional terminal conditions or requirements on the
length of the prediction horizon.

Funnel MPC is inspired by funnel control which is an adaptive feedback control technique of high-gain
type first proposed in~\cite{IlchRyan02b}, see also the recent work~\cite{BergIlch21} for a
comprehensive literature overview.
The funnel controller is inherently robust and allows for output tracking with prescribed
performance guarantees for a fairly large class of systems solely invoking structural assumptions.
In contrast to MPC, funnel control does not use a model of the system. The control input signal is
solely determined by the instantaneous values of the system output.
The controller therefore cannot ``plan ahead''. This often results in unnecessary high control
values and a rapidly changing control signal with peaks.
Compared to this, by utilizing a system model, funnel MPC exhibits a significantly better controller
performance in numerical simulations, see~\cite{BergDenn21,berger2019learningbased}.
A direct combination of both control techniques which allows for the application of funnel MPC in the 
presence of disturbances and even a structural plant-model mismatch was recently proposed in~\cite{BergDenn23}.
This approach was further extended in~\cite{LanzaDenn23learning} by a learning component which realizes 
online learning of the model to allow for a steady improvement of the controller performance over time.

Nevertheless, the results of~\cite{BergDenn21,BergDenn23,LanzaDenn23learning} are still restricted to the case of systems with relative degree one.
Utilizing so-called feasibility constraints in the optimization problem and restricting the class of
admissible funnel functions, the case of arbitrary relative degree was considered in~\cite{BergDenn22}.
Like in previous results no terminal conditions  nor requirements on the length of the prediction horizon are imposed.
But then again these  feasibility constraints lead to an increased computational effort and they depend on a number of design parameters which are not easy to determine.
Furthermore, the cost functional used in~\cite{BergDenn22} is rather complex (using several auxiliary error variables).
In the present paper, we resolve these problems and propose a novel cost functional to extend funnel MPC to systems with arbitrary relative degree.
We further enlarge the considered system class considered in previous works to encompass systems with nonlinear time delays and potentially infinite-dimensional internal dynamics.
Similar to funnel MPC for relative degree one systems, only the distance of one error variable to the funnel boundary is penalized and no feasibility constraints are required.

\subsection{Nomenclature}
$\N$ and $\R$ denote natural and real numbers, respectively.
$\N_0:=\N\cup\{0\}$ and $\Rp:=[0,\infty)$.
$\Norm{x}:=\sqrt{\al x,x\ar}$~denotes the Euclidean norm of $x\in\R^n$.
$\Norm{A}$ denotes the induced operator norm
$\Norm{A}:=\sup_{\Norm{x} = 1}\Norm{Ax}$ for $A\in\R^{n\times n}$.
$\GL_n(\R)$ is the group of invertible $\R^{n\times n}$ matrices.
For $V\subset\R^m$, $\con(V,\R^n)$ is the linear space of continuous functions $f:V\to\R^n$
and $\con^p(V,\R^n)$ is the linear space of $p$-times continuously differentiable functions,
where $p\in\N\cup \{\infty\}$;  $\con^0(V,\R^n):=\con(V,\R^n)$.
On an interval $I\subset\R$,  $L^\infty(I,\R^n)$ denotes the space of measurable and essentially bounded
functions $f: I\to\R^n$ with norm $\SNorm{f}:=\esssup_{t\in I}\Norm{f(t)}$,
$L^\infty_{\text{loc}}(I,\R^n)$ the set of measurable and locally essentially bounded functions, and $L^p(I,\R^n)$
the space of measurable and $p$-integrable functions with norm $\LNorm[p]{\cdot}$ and with $p\ge 1$.
Furthermore, $W^{k,\infty}(I,\R^n)$ is the Sobolev space of all $k$-times weakly differentiable functions
$f:I\to\R^n$ such that $f,\dots, f^{(k)}\in L^{\infty}(I,\R^n)$;  and $f|_J$ denotes the restriction of a function $f: I \to \R^n$ to the interval $J \subseteq I$.

\subsection{System class}

We consider nonlinear control affine multi-input multi-output systems of order $r\in\N$ of the form
\begin{equation} \label{eq:Sys}
    \begin{aligned}
   & y^{(r)}(t) = f \big(\oT(y,\ldots,y^{(r-1)} )(t) \big) \\
                &\phantom{=}\qquad\quad+ g \big(\oT(y,\ldots,y^{(r-1)} )(t) \big) u(t), \\
   &\left. \begin{aligned}
       y|_{[-\sigma,0]} = y^0  \in \con^{r-1}([-\sigma,0],\R^m), && \mbox{if } \sigma >0,\\
       \big(y(0),\ldots,y^{(r-1)}(0)\big) = y^0  \in\R^{rm}, && \mbox{if } \sigma = 0,
   \end{aligned} \right\}
    \end{aligned}
\end{equation}
with initial ``memory'' $\sigma \ge 0$, functions $f \in \con(\R^q, \R^m)$,  $g \in \con(\R^q, \R^{m \times m})$, and an operator $\oT$.
The operator~$\oT$ is causal, locally Lipschitz, and satisfies a bounded-input bounded-output and a limited memory property.
It is characterised in detail in the following~\Cref{Def:OperatorClass}.

\begin{definition} \label{Def:OperatorClass} 
For $n,q\in\N$ and $\sigma\geq 0$, the set $\cT_{\sigma}^{n,q}$ denotes the class of operators $\textbf{T}:
\con([-\sigma,\infty),\R^n) \to L^\infty_{\loc} (\Rp, \R^{q})$
for which the following properties hold:
\begin{itemize}
    \item\textit{Causality}:  $\fa y_1,y_2\in\con([-\sigma,\infty),\R^n)$  $\fa t\geq 0$:
    \[
        y_1\vert_{[-\sigma,t]} = y_2\vert_{[-\sigma,t]}
        \ \Impl\ 
        \textbf{T}(y_1)\vert_{[0,t]}=\textbf{T}(y_2)\vert_{[0,t]}.
    \]
    \item\textit{Local Lipschitz}: 
    $\fa t \ge 0 $ $\fa y \in \con([-\sigma,t] , \R^n)$ 
    $\ex \Delta, \delta, c > 0$ 
    $\fa y_1, y_2 \in \con([-\sigma,\infty) , \R^n)$ with
    $y_1|_{[-\sigma,t]} = y_2|_{[-\sigma,t]} = y $ 
    and $\Norm{y_1(s) - y(t)} < \delta$,  $\Norm{y_2(s) - y(t)} < \delta $ for all $s \in [t,t+\Delta]$:
    \[
     \hspace*{-2mm}   \esssup_{\mathclap{s \in [t,t+\Delta]}}  \Norm{\textbf{T}(y_1)(s) \!-\! \textbf{T}(y_2)(s) }  
        \!\le\! c \ \sup_{\mathclap{s \in [t,t+\Delta]}}\ \Norm{y_1(s)\!-\! y_2(s)}\!.
    \] 
    \item\textit{Bounded-input bounded-output (BIBO)}:
    $\fa c_0 > 0$ $\ex c_1>0$  $\fa y \in \con([-\sigma,\infty), \R^n)$:
    \[
    \sup_{t \in [-\sigma,\infty)} \Norm{y(t)} \le c_0 \ 
    \Impl \ \sup_{t \in [0,\infty)} \Norm{\textbf{T}(y)(t)}  \le c_1.
    \]
    \item\textit{Limited memory}: 
    $\ex \tau\geq0$ $\fa t\geq 0$
    $\fa y_1, y_2 \in \con([-\sigma,\infty) , \R^n)$ with 
    $y_1|_{I}= y_2|_{I}$ on the interval $I:= [t-\tau,\infty)\cap[-\sigma,\infty)$ and $\oT(y_1)|_{J} = \oT(y_2)|_{J}$ on the interval $J:=[t-\tau,t]\cap[0,t]$:
    \[
        \oT(y_1)\vert_{[t,\infty)}=\oT(y_2)\vert_{[t,\infty)}.
    \]
    The number $\tau$ in the above property is called \textit{memory limit} of the operator~$\oT$.
\end{itemize}
\end{definition}
Note that many physical phenomena such as \emph{backlash} and \emph{relay hysteresis}, and \emph{nonlinear time delays}
can be modelled by means of the operator~$\oT$, where $\sigma$ corresponds to the initial delay, cf.~\cite[Sec.~1.2]{BergIlch21}.
Moreover, systems with infinite-dimensional internal dynamics can be represented by~\eqref{eq:Sys}, see~\cite{BergPuch20}.
For a practically relevant example of infinite-dimensional internal dynamics (modelled by an operator~$\oT$) we refer to the moving water tank system considered in~\cite{BergPuch22}.
Compared to previous works, the limited memory property is new here and required in the context of MPC,
in order to ensure that in each MPC step only the history of the state up to the memory limit $\tau\ge 0$ is utilized,
instead of requiring the full history, which would be infeasible in practice.
In Section~\ref{Sec:Sim}, we illustrate how the limited memory property can be checked by means of a specific example.

For given $\hat{t}\geq 0$, $\tau,\sigma\geq0$, we use the notation $I_{\sigma}^{\hat{t},\tau}:=[-\sigma,\hat{t}]\cap[\hat{t}-\tau,\hat{t}]$.
For a control $u\in L^\infty_{\loc}([\hat{t},\infty), \R^m)$,
a continuous function $x = (x_1,\ldots,x_r)$ with $x_i: [-\sigma,\omega) \to \R^{m}$, $\omega \in(\hat{t},\infty]$, $i=1,\ldots,r$,
is called a solution of the initial value problem~\eqref{eq:Sys} 
(in the sense of \textit{Carath\'{e}odory})
at initial time $\hat{t}$ and with initial data $\hat{y}\in\con^{r-1}(I_{\sigma}^{\hat{t},\tau},\R^m)$ and
$\hat{\oT}\in L^\infty_{\loc} (I_{0}^{\hat{t},\tau}, \R^{q})$,  if 
\begin{equation} \label{eq:InitialValue}
\begin{aligned}
    x|_{I_{\sigma}^{\hat{t},\tau}} &= (\hat{y},\hat{y}^{(1)},\ldots,\hat{y}^{(r-1)}),\\
    \oT(x)|_{I_{0}^{\hat{t},\tau}} &=\hat{\oT},
\end{aligned}
\end{equation}
and $x\vert_{[\hat{t},\omega)}$ is absolutely continuous such that $\dot x_i(t) = x_{i+1}(t)$ for $i=1,\ldots,r-1$, and $\dot x_r(t) = f(\oT(x)(t)) + g(\oT(x)(t)) u(t)$ for almost all~$t\in[\hat{t},\omega)$.
A solution $x$ is said to be \textit{maximal}, if it has no right extension that is also a solution.
This maximal solution is called the \textit{response} associated with $u$ and denoted by~$x(\cdot;\hat{t},\hat{y},\hat{\oT},u)$. 
Its first component $x_1$ is  denoted by~$y(\cdot;\hat{t},\hat{y},\hat{\oT},u)$.
Note that in the above definition we did not distinguish between the cases $\sigma>0$ and $\sigma=0$ as in~\eqref{eq:Sys},
since a larger variety of cases is possible here.
Essentially, the cases $I_{\sigma}^{\hat{t},\tau}\neq \{\hat t\}$ and $I_{\sigma}^{\hat{t},\tau} = \{\hat t\}$ mus be distinguished;
in the latter case, we will implicitly assume a representation of the initial condition~$\hat y$ as in~\eqref{eq:Sys} throughout the article.

Let us provide an additional explanation of the above definition.
At first glance it might seem that $\hat \oT$ is completely determined by $(\hat{y},\hat{y}^{(1)},\ldots,\hat{y}^{(r-1)})$.
However, the latter is not an element of the domain of $\oT$ in general.
In fact, the second condition in~\eqref{eq:InitialValue} entwines the full history of~$x$ (by causality, $T(x)(t)$ 
is defined in terms of $x|_{[-\sigma,t]}$ for any $t\ge 0$) with the initial datum~$\hat \oT$.
In contrast to this, the first condition in~\eqref{eq:InitialValue} only fixes the history of~$x$ on the interval $I_{\sigma}^{\hat{t},\tau}$.

We summarize our assumptions and define the general system class under consideration.
\begin{definition}[System class]\label{Def:SysClass}
    We say that the system~\eqref{eq:Sys} belongs to the \emph{system class}~$\cN^{m,r}$ for $m,r\in\N$,
    written~$(f,g,\oT)\in\cN^{m,r}$, if, for some $q\in\N$ and $\sigma \geq0$, the following holds:
     $f\in\con(\R^q ,\R^m)$,
     $g\in \con(\R^q ,\R^{m\times m})$ satisfies $g(x)\in\GL_{m}(\R)$ for all $x\in\R^q$,
     and ${\oT\in\cT^{rm,q}_{\sigma}}$.
\end{definition}

\subsection{Control objective}
The objective is to design a control strategy that allows tracking of a given reference
trajectory~$y_{\rf}\in W^{r,\infty}(\Rp,\R^{m})$ within pre-specified error bounds. To be more
precise, the tracking error ~$e(t):=y(t)-y_{\rf}(t)$  shall evolve within the prescribed performance funnel
\begin{align*}
    \cF_\psi= \setdef{(t,e)\in \Rp\times\R^{m}}{\Norm{e} < \psi(t)}.
\end{align*}
This funnel is determined by the choice of the function~$\psi$ belonging  to
\begin{align*}
    \cG:=\setdef
        {\psi\in W^{1,\infty}(\Rp,\R)}
        {
          \begin{array}{l} \inf_{t\ge 0}  \psi(t) > 0,\\
          \exists\, \alpha, \beta>0\ \forall\, t\ge 0:\\
          \dot \psi(t) \ge -\alpha \psi(t) + \beta 
          \end{array}
        },
\end{align*}
see also Figure~\ref{Fig:funnel}.
 \begin{figure}[h]
  \begin{center}
\begin{tikzpicture}[scale=0.35]
\tikzset{>=latex}
  \filldraw[color=gray!25] plot[smooth] coordinates {(0.15,4.7)(0.7,2.9)(4,0.4)(6,1.5)(9.5,0.4)(10,0.333)(10.01,0.331)(10.041,0.3) (10.041,-0.3)(10.01,-0.331)(10,-0.333)(9.5,-0.4)(6,-1.5)(4,-0.4)(0.7,-2.9)(0.15,-4.7)};
  \draw[thick] plot[smooth] coordinates {(0.15,4.7)(0.7,2.9)(4,0.4)(6,1.5)(9.5,0.4)(10,0.333)(10.01,0.331)(10.041,0.3)};
  \draw[thick] plot[smooth] coordinates {(10.041,-0.3)(10.01,-0.331)(10,-0.333)(9.5,-0.4)(6,-1.5)(4,-0.4)(0.7,-2.9)(0.15,-4.7)};
  \draw[thick,fill=lightgray] (0,0) ellipse (0.4 and 5);
  \draw[thick] (0,0) ellipse (0.1 and 0.333);
  \draw[thick,fill=gray!25] (10.041,0) ellipse (0.1 and 0.333);
  \draw[thick] plot[smooth] coordinates {(0,2)(2,1.1)(4,-0.1)(6,-0.7)(9,0.25)(10,0.15)};
  \draw[thick,->] (-2,0)--(12,0) node[right,above]{\normalsize$t$};
  \draw[thick,dashed](0,0.333)--(10,0.333);
  \draw[thick,dashed](0,-0.333)--(10,-0.333);
  \node [black] at (0,2) {\textbullet};
  \draw[->,thick](4,-3)node[right]{\normalsize$\inf\limits_{t \ge 0} \psi(t)$}--(2.5,-0.4);
  \draw[->,thick](3,3)node[right]{\normalsize$(0,e(0))$}--(0.07,2.07);
  \draw[->,thick](9,3)node[right]{\normalsize$\psi(t)$}--(7,1.4);
\end{tikzpicture}
\end{center}
 \vspace*{-4mm}
 \caption{Error evolution in a funnel $\mathcal F_{\psi}$ with boundary $\psi(t)$.}
 \label{Fig:funnel}
 \vspace*{-4mm}
 \end{figure}
Note that the evolution in $\mathcal{F}_{\psi}$ does not force the tracking error to converge to zero asymptotically. 
Furthermore, the funnel boundary is not necessarily monotonically decreasing 
and there are situations, like in the presence of periodic disturbances,
where widening the funnel over some later time interval might be beneficial. 
The specific application usually dictates the constraints on the tracking error and thus
indicates suitable choices for~$\psi$.

To achieve the control objective, we introduce auxiliary error variables.
Define, for $(\xi_1,\ldots,\xi_r)\in\R^{rm}$ with $\xi_i\in\R^m$ and for parameters $k_1,\ldots, k_{r-1}\in\Rp$, the functions $e_i:\R^{rm}\to\R^m$ recursively by
\begin{equation} \label{eq:ErrorVar}
    \begin{aligned}
        e_1 (\xi_1,\ldots,\xi_r)&:= \xi_1,\\
        e_{i+1} (\xi_1,\ldots,\xi_r)&:=e_{i}(S(\xi_1,\ldots,\xi_r))+k_i e_{i}(\xi_1,\ldots,\xi_r),
    \end{aligned}
\end{equation}%
for $i=1,\ldots,r-1$, where 
\[
    S:\R^{rm}\to\R^{rm},\ S(\xi_1,\ldots,\xi_r):= (\xi_2,\ldots,\xi_r,0)
\]
is the left shift operator.

\begin{remark}
Using the shorthand notation 
\begin{equation*}
    \chi(\zeta)(t):=(\zeta(t),\dot{\zeta}(t),\ldots,\zeta^{(r-1)}(t))\in\R^{rm}    
\end{equation*}
for a function  $\zeta\in W^{r,\infty}(\Rp,\R^m)$ and $t\in\Rp$, we get 
\begin{equation} \label{eq:ErrorVarDyn}
    \begin{aligned}
    e_1    (\chi(\zeta)(t))&=\zeta(t),\\
    e_{i+1}(\chi(\zeta)(t))&=\dd{t}e_{i}(\chi(\zeta)(t))+k_i e_{i}(\chi(\zeta)(t))
    \end{aligned}
\end{equation}
for $i= 1,\ldots,r-1$.
Furthermore, using the polynomials $p_i(s)=\prod_{j=1}^i(s+k_j)\in\R[s]$, 
the function $e_{i+1}(\chi(\zeta)(t))$ can be represented as
\[
    e_{i+1}(\chi(\zeta)(t))=p_i(\dd{t})\zeta(t)
\]
for $i=1,\ldots, r-1$.
In~\Cref{Algo:FMPC} below, the error variables $e_i$ are evaluated at $\chi(y-y_{\rf})(t)$.
Note that $e_1(\chi(y-y_{\rf})(t))$ is identical to the tracking error $e(t)=y(t)-y_{\rf}(t)$.
\end{remark}

\section{Funnel MPC}
We propose for $\theta\in\cG$, 
design parameter~$\lambda_u\in\Rp$, 
and functions $e_1,\ldots,e_r$ as defined in~\eqref{eq:ErrorVar} with parameters~$k_i>0$ for $i=1,\ldots, r-1$,
the \textit{stage cost function } $\ell_\theta:\Rp\times\R^{rm}\times\R^{m}\to\R\cup\{\infty\}$ defined by
\begin{align}\label{eq:stageCostFunnelMPC}
    \ell_\theta(t,\xi,u) \!=\!
    \begin{dcases}
        \frac {\Norm{e_r(\xi)}^2}{\theta(t)^2 
        - \Norm{e_r(\xi)}^2} \!+\! \l_u \Norm{u}^2\!\!, & \!\!\Norm{e_r(\xi)} \!\neq \theta(t)\\
        \infty,                                 & \!\! \text{else}.
    \end{dcases}
\end{align}
\begin{algo}[Funnel MPC]\label{Algo:FMPC}\ \\
    \textbf{Given:} System~\eqref{eq:Sys}, reference signal $y_{\rf}\in
    W^{r,\infty}(\Rp,\R^{m})$, funnel function $\theta\in\cG$, input saturation level $M>0$, memory limit $\tau\geq0$ of the operator~$\oT$,
    initial data $y^0\in\con^{r-1}([-\sigma,0],\R^m)$ 
    and stage cost function~$\ell_\theta$ as in~\eqref{eq:stageCostFunnelMPC}.\\
    \textbf{Set} the time shift $\delta >0$, the prediction horizon $T\geq\delta$, and initialize
    the current time~$\hat{t}:=0$.\\
    \textbf{Steps:}
    \begin{enumerate}[label=(\alph*), ref=\alph*, leftmargin=*]
    \item\label{agostep:FMPCFirst}
    Obtain a measurement of the output $y$ of~\eqref{eq:Sys} and of~$\oT(y)$ on the interval $I_{\sigma}^{\hat{t},\tau}:=[-\sigma,\hat{t}]\cap[\hat{t}-\tau,\hat{t}]$
    and set $\hat y := y|_{I_{\sigma}^{\hat{t},\tau}}$ and  $\hat{\oT}:=\oT(\chi(y))|_{I_{0}^{\hat{t},\tau}}$.
    \item Compute a solution $u^{\star}\in L^\infty([\hat{t},\hat{t} +T],\R^{m})$ of
    \begin{equation}\label{eq:FMPC_OCP}
            \!\!\!\mathop
            {\operatorname{minimize}}_{\mathcal{\substack{u\in L^{\infty}([\hat{t},\hat{t}+T],\R^{m}\!),\\\SNorm{u}\leq M}}}\!
            \int_{\hat{t}}^{\hat{t}+T}\!\!\!\!\!\!\!\!\ell_\theta(t,\zeta(t),u(t))\!\d t ,
    \end{equation}
    where $\zeta(t):=\chi(y(\cdot;\hat{t},\hat{y},\hat{\oT},u)-y_{\rf})(t)$.
    \item Apply the time-varying feedback law $\mu:[\hat{t},\hat{t}+\delta)\times\con^{r-1}(I_{\sigma}^{\hat{t},\tau},\R^m)\times  L^\infty_{\loc} (I_{0}^{\hat{t},\tau}, \R^{q})\to\R^m$ \
    defined by
        \begin{equation}\label{eq:FMPC-fb}
            \ \mu(t,\hat y,\hat{\oT}) =u^{\star}(t),
        \end{equation}
        to system~\eqref{eq:Sys}. Increase $\hat{t}$ by $\delta$ and go to Step~\eqref{agostep:FMPCFirst}.
    \end{enumerate}
\end{algo}

\begin{remark}
   For a nonlinear system of the form
   \begin{equation}\label{eq:SysWithState}
   \begin{aligned}
       \dot{x}(t)  & = \tilde f(x(t)) + \tilde g(x(t)) u(t),\quad x(0)=x^0\in\R^n,\\
       y(t)        & = h(x(t)),
   \end{aligned} 
   \end{equation}
   with  nonlinear functions~$\tilde f:\R^n\to \R^n$,
   $\tilde g:\R^n\to \R^{n\times m}$ and $h : \R^n \to \R^m$, 
   there exists, under assumptions provided in~\cite[Cor.~5.6]{ByrnIsid91a},
   a coordinate transformation induced by a diffeomorphism~$\Phi:\R^n\to\R^n$
   which puts the system in the form~\eqref{eq:Sys} with $\sigma=0$, appropriate functions~$f$ and~$g$ and an operator $\textbf{T}$, which is the solution operator of the internal dynamics of the transformed system.
   Assuming the existence of the diffeomorphism~$\Phi$, the~funnel MPC~\Cref{Algo:FMPC} can be directly applied to the system~\eqref{eq:SysWithState} without computing~$\Phi$. In this case, the output derivatives $\dot y,\ldots, y^{(r-1)}$ required in the OCP~\eqref{eq:FMPC_OCP} can be determined as functions of the state; e.g., $\dot y(t) = h'(x(t)) \tilde f(x(t))$ if $r=1$.
   All results presented in this paper can also be expressed for the system~\eqref{eq:SysWithState} using~$\Phi$.
   Concrete knowledge about the coordinate transformation however is not required for 
   the design and application of the controller -- it is merely used as a tool for the proofs.
\end{remark}

In the following main result we show that for a funnel function $\psi\in\cG$, a reference signal $y_{\rf}$ and sufficiently large $k_1,\ldots,k_{r-1}$ (depending on the choice of~$\psi$, $y_{\rf}$ and the initial data~$y^0$) there exists a sufficiently large saturation level $M>0$ such that the funnel MPC Algorithm~\ref{Algo:FMPC} (with a suitable function $\theta\in\cG$) is initially
and recursively feasible for every prediction horizon $T > 0$ and that it guarantees the evolution of the tracking error
within the performance funnel~$\cF_\psi$.

\begin{theorem}\label{Th:FunnelMPC}
    Consider system~\eqref{eq:Sys} with $(f,g,\oT)\in\cN^{m,r}$, initial data $y^0  \in \con^{r-1}([-\sigma,0],\R^m)$ and let $\tau\ge 0$ be the memory limit of the operator~$\oT$.
    Let $y_{\rf}\in W^{r,\infty}(\Rp,\R^{m})$ and choose $\psi\in\cG$ with associated constants $\alpha, \beta >0$.
    In addition, let $\gamma\in(0,1)$ such that
    \[
        \Norm{y^0(0)-y_{\rf}(0)} \le \gamma^r \psi(0).
    \]
    Furthermore, choose parameters $k_1,\ldots,k_{r-1}$ such that for all $i=2,\ldots,r-1$ we have
    \begin{equation}\label{eq:cond-k_i}
    \begin{aligned}
       k_1 &\ge \frac{2\Norm{(\dot y^0-\dot y_{\rf})(0)}}{\gamma^{r-1}(1-\gamma)\psi(0)} +\frac{2\left(\alpha+\frac{1}{\gamma^{r-1}}\right)}{1-\gamma},\\
        k_i &\ge \frac{2\gamma\Norm{\dd{t} e_{i}(\chi(y^0-y_{\rf})(0))}}{(1-\gamma)\big(\Norm{e_{i}(\chi(y^0-y_{\rf})(0))}+\frac{\beta}{\alpha \gamma^{i-2}}\big)} + \frac{2(1+\alpha)}{1-\gamma}.
    \end{aligned}
    \end{equation}
    \noindent
    Then, there exists $M>0$ such that the funnel MPC~\Cref{Algo:FMPC} with prediction horizon $T>0$, time shift $\delta>0$,
    and stage cost function~$\ell_\theta$ with
    \begin{multline}\label{eq:DefTheta}
        \theta(t):= \frac{1}{\gamma}\big( \Norm{\dd{t} e_{r-1}(\chi(y^0-y_{\rf})(0))} \\
         + k_{r-1} \Norm{e_{r-1}(\chi(y^0-y_{\rf})(0))}\big) \me^{-\alpha t} + \frac{\beta}{\alpha\gamma^{r-1}}
    \end{multline}
    for $r>1$ and $\theta(t):=\psi(t)$ for $r=1$,
     is initially and recursively feasible, i.e., at time $\hat t = 0$ and at each
    successor time $\hat t\in \delta\N$ the OCP~\eqref{eq:FMPC_OCP}
    has a solution.
    In particular, the closed-loop system consisting of~\eqref{eq:Sys} and the funnel MPC feedback~\eqref{eq:FMPC-fb}
    has a (not necessarily unique) global solution $x:[-\sigma,\infty)\to\R^{rm}$ with corresponding output $y = x_1$ and the corresponding input is given by
    \[
        u_{\rm FMPC}(t) = \mu(t,y|_{I_{\sigma}^{\hat{t},\tau}},\oT(\chi(y))|_{I_{0}^{\hat{t},\tau}}).
    \]
    for $t\in [\hat t,\hat t+\delta)$, $\hat t\in t^0+\delta\N$, and $I_{\sigma}^{\hat{t},\tau}=[-\sigma,\hat{t}]\cap[\hat{t}-\tau,\hat{t}]$.
    Furthermore, each global solution~$x$ with corresponding output~$y$ and input $u_{\rm FMPC}$ satisfies:
    \begin{enumerate}[(i)]
        \item\label{th:item:BoundedInput}
            $\fa t\ge t^0:\quad \Norm{u_{\rm FMPC}(t)}\leq M$,
        \item\label{th:item:ErrorInFunnel}$\fa t\geq 0:\quad \Norm{y(t)-y_{\rf}(t)} <
            \psi(t)$.
    \end{enumerate}
\end{theorem}

Note that for $r=1$ the above result essentially coincides with~\cite[Thm.~2.10]{BergDenn21},
except for the different system classes considered.
In particular, the stage cost function in~\eqref{eq:stageCostFunnelMPC} is the same as in~\cite{BergDenn21} in this case.
In this sense, the findings of the present paper represent an extension of the results of~\cite{BergDenn21}.

\section{Proof of the main result}
Throughout this section, let the assumptions of \Cref{Th:FunnelMPC} hold. 
Then set $e_i^0 :=e_{i}(\chi(y^0-y_{\rf})(0))$ and $\dot e_i^0 := \dd{t} e_{i}(\chi(y^0-y_{\rf})(0))$ for $i=1,\ldots,r-1$ and we define $\psi_1:=\psi$ and $\psi_2,\ldots,\psi_r$ as follows:
\begin{equation}\label{eq:psi_i}
    \psi_{i+1}(t):= \frac{1}{\gamma^{r-i}}\big(\Norm{\dot e_i^0} + k_i \Norm{e_i^0}\big) \me^{-\alpha t} + \frac{\beta}{\alpha\gamma^{r-1}}
\end{equation}
for $t\ge 0$ and $i=1,\ldots,r-1$.
Note that $\psi_i\in\cG$ for all $i=1,\ldots, r$.
Further note that $\psi_r =\theta$ as in~\eqref{eq:DefTheta}.
In order to achieve that the tracking error~$e=y-y_{\rf}$ evolves within the funnel $\mathcal{F}_{\psi}$, 
we address the problem of ensuring that, for all $t\ge 0$, $\chi(e)(t)$ is an element of the set
\[
    \cD_{t}^{r}:=\setdef
        { \xi\in\R^{rm}}
        {
              \Norm{e_{i}(\xi)}<\psi_i(t),\ i=1,\ldots, r
        }.
\]
By construction of~$\psi_i$ and~\eqref{eq:ErrorVarDyn} we have
\[
    \|e_i(0)\| \le \|\dot e_{i-1}(0)\| + k_{i-1} \|e_{i-1}(0)\| < \psi_i(0)
\]
for all $i=2,\ldots,r$, and by assumption we have
\[
    \Norm{e(0)} \le \gamma^r \psi(0) < \psi_1(0).
\]
Therefore, $\chi(y^0-y_{\rf})(0)\in \cD_{0}^{r}$.
We define the set of all functions $\zeta\in\con^{r-1}([-\sigma,\infty),\R^m)$ 
which coincide with $y^0$ on the interval~$[-\sigma,0]$ and for which 
$\chi(\zeta-y_{\rf})(t)\in\cD^r_t$ on the interval $J^{s}:=[0,s)$ for some ${s\in(0,\infty]}$ 
as follows: 
\[
    \cY^r_s\!:=\!\!\setdef
        {\!\!\!\zeta\!\in\! \con^{r-1}([-\sigma,\!\infty),\R^m)\!\!\!}
        {\!\!\!\!
        \begin{array}{l}
             \chi(\zeta|_{[-\sigma,0]})=\chi(y^0), \\ \!\! \fa t\!\in\! J^s\!\!: \chi(\zeta\!-\!y_{\rf})(t)\!\in\!\cD_t^r
        \end{array}\!\!\!\!\!
        }\!.
\] 

\begin{lemma}\label{Lemma:DynamicBounded}
    Consider the system~$\eqref{eq:Sys}$ with $(f,g,\oT)\in\cN^{m,r}$.
    Let $\psi_i\in\cG$, for $i=1,\ldots, r$ with parameters $k_i>0$ for $i=1,\ldots, r-1$.
    Further, let $y_{\rf}\in W^{r,\infty}(\Rp,\R^m)$ and 
    $y^0  \in \con^{r-1}([-\sigma,0],\R^m)$,
    Then, there exist constants~$f_{\max}$, $g_{\max}>0$ such that for all $s\in(0,\infty]$, $\zeta\in\cY^r_s$, and $t\in[0,s)$
    \begin{align*}
        f_{\max}&\geq\SNorm{f(\oT(\chi(\zeta))|_{[0,s)})},\\
        g_{\max}&\geq\SNorm{g(\oT(\chi(\zeta))|_{[0,s)})^{-1}}.
    \end{align*}
\end{lemma}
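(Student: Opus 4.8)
The plan is to first derive a uniform a~priori bound on the whole jet $\chi(\zeta)$ for every admissible $\zeta\in\cY^r_\tau$, then transfer this to a bound on $\oT(\chi(\zeta))$ using the operator properties, and finally close the argument by continuity of $f$ and of matrix inversion.

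\emph{Step 1 (uniform bound on $\chi(\zeta)$).} For $\zeta\in\cY^r_\tau$ and $t\in[t_0,\tau)$ the defining condition $\chi(\zeta-y_{\rf})(t)\in\cD_t^r$ gives $\Norm{e_i(\chi(\zeta-y_{\rf})(t))}<\psi_i(t)\le\SNorm{\psi_i}$ for all $i=1,\dots,r$, where each $\psi_i$ is bounded since $\psi_i\in\cG\subset W^{1,\infty}(\Rp,\R)$ (cf.~\eqref{eq:psi_i}). Writing $\eta:=\zeta-y_{\rf}$ and using the recursion~\eqref{eq:ErrorVarDyn}, i.e.\ $e_1(\chi(\eta))=\eta$ and $e_{i+1}(\chi(\eta))=\dd{t}e_i(\chi(\eta))+k_ie_i(\chi(\eta))$, I would prove by induction on $j=0,\dots,r-1$ that $\eta^{(j)}$ is bounded. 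The base case is $\Norm{\eta}<\SNorm{\psi_1}$; for the inductive step I use that $e_{j+1}(\chi(\eta))=p_j(\dd{t})\eta$ with $p_j(s)=\prod_{l=1}^j(s+k_l)$ monic of degree $j$, so $\eta^{(j)}$ equals $e_{j+1}(\chi(\eta))$ minus lower-order derivative terms with the fixed coefficients of $p_j$, all of which are already bounded. Hence $\chi(\eta)$, and therefore $\chi(\zeta)=\chi(\eta)+\chi(y_{\rf})$, is bounded on $[t_0,\tau)$ by a constant $c_0>0$ depending only on the $\SNorm{\psi_i}$, the $k_i$, and $\SNorm{y_{\rf}},\dots,\SNorm{y_{\rf}^{(r-1)}}$, but not on $\tau,\zeta,t$. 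On $[t_0-\sigma,t_0]$ one has $\chi(\zeta)=\chi(y^0)$, a fixed function continuous on a compact interval; after enlarging $c_0$ this yields $\Norm{\chi(\zeta)(s)}\le c_0$ for all $s\in[t_0-\sigma,\tau]$ (the closed endpoint by continuity of $\chi(\zeta)$).

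\emph{Step 2 (bound on $\oT(\chi(\zeta))$).} The genuine obstacle is that the BIBO property in Definition~\ref{Def:OperatorClass} requires an argument bounded on all of $[t_0-\sigma,\infty)$, whereas $\zeta$ is only controlled up to $\tau$. I would resolve this by a causality-plus-extension argument. If $\tau=\oo$, then $\chi(\zeta)$ is already globally bounded by $c_0$ and BIBO applies directly. If $\tau<\oo$, I construct $\hat\zeta\in\con^{r-1}([t_0-\sigma,\infty),\R^m)$ coinciding with $\zeta$ on $[t_0-\sigma,\tau]$ and satisfying $\Norm{\chi(\hat\zeta)(s)}\le\tilde c_0$ for all $s$, with $\tilde c_0=\tilde c_0(c_0,r)$ again uniform: this is a standard $\con^{r-1}$ extension that freezes the prescribed jet $\chi(\zeta)(\tau)$ (of norm $\le c_0$) to a constant via a fixed cut-off profile, so that all derivatives up to order $r-1$ stay bounded by $c_0$ times a constant depending only on $r$. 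Applying BIBO to $\hat\zeta$ produces $c_1>0$ depending only on $\tilde c_0$ with $\sup_{s\ge t_0}\Norm{\oT(\chi(\hat\zeta))(s)}\le c_1$, and causality gives $\oT(\chi(\hat\zeta))|_{[t_0,\tau)}=\oT(\chi(\zeta))|_{[t_0,\tau)}$, so $\SNorm{\oT(\chi(\zeta))|_{[t_0,\tau)}}\le c_1$ uniformly in $\tau$ and $\zeta$.

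\emph{Step 3 (conclusion).} Since $f\in\con(\R^q,\R^m)$ and, because $g(x)\in\GL_m(\R)$ for all $x$, the map $x\mapsto g(x)^{-1}$ is continuous on $\R^q$ (matrix inversion being continuous on $\GL_m(\R)$), both are bounded on the compact ball $\{w\in\R^q:\Norm{w}\le c_1\}$. Setting $f_{\max}:=\max_{\Norm{w}\le c_1}\Norm{f(w)}$ and $g_{\max}:=\max_{\Norm{w}\le c_1}\Norm{g(w)^{-1}}$ then delivers the claimed bounds, uniformly over all $\tau\in(t_0,\oo]$, $\zeta\in\cY^r_\tau$ and $t\in[t_0,\tau)$. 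The induction in Step~1 is routine; the main difficulty I expect is Step~2, namely reconciling the merely local funnel constraint on $[t_0,\tau)$ with the global hypothesis of the BIBO property through the uniform $\con^{r-1}$ extension and causality.
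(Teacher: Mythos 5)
Your proposal is correct and follows essentially the same route as the paper: a uniform bound on $\chi(\zeta)$ from the funnel conditions (where the paper inverts the triangular relation $(e_1,\ldots,e_r)=S\,\chi(\zeta-y_{\rf})$ via an invertible matrix $S$, you run the equivalent induction on the monic polynomials $p_j$), then an extension-plus-causality argument to invoke the BIBO property, and finally continuity of $f$ and $g(\cdot)^{-1}$ on a compact set. The only notable difference is that your Step~2 constructs a genuine $\con^{r-1}$ extension with controlled jet, which is more than needed: since $\oT$ accepts arbitrary continuous $\R^{rm}$-valued arguments, the paper simply extends $\chi(\zeta)|_{[t_0-\sigma,\tau]}$ as a continuous function staying in the compact set $K$, with no control of derivatives required.
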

\begin{proof}
    We prove the Lemma by adapting the proof of~\cite[Lem.~2.2]{LanzaDenn23sampled} to the given setting. 
    By definition of~$\cY_\infty^r$ and $\cD_{t}^r$, we have for all $i=1,\ldots,r$
    \[
        \fa\zeta\in\cY_\infty^r\fa t\geq 0:\quad\Norm{e_i(\chi(\zeta-y_{\rf})(t))} < \psi_i(t).
    \]
    Due to the definition of the error variables~$e_i$ there exists an invertible matrix $S\in\R^{rm\times rm}$ such that
    \begin{equation}\label{eq:reps_ei_chi}
        \begin{pmatrix} e_1(\chi(\zeta-y_{\rf})) \\ \vdots \\  e_r(\chi(\zeta-y_{\rf}))\end{pmatrix} = S \chi(\zeta-y_{\rf}).
    \end{equation}
    Hence, by boundedness of $\psi_i$ and $y_{\rf}^{(i)}$ for all~$i=1,\ldots, r$,
    there exists a compact set $K\subset\R^{rm}$ with 
    \[
        \fa \zeta\in\cY_\infty^r\fa t\geq 0:\quad \chi(\zeta)(t)\in K.
    \]
    Invoking the BIBO property of the operator~$\oT$, there exists a compact set $K_q\subset\R^q$ with  
    $\oT(\xi)([0,\infty))\subset K_q$ for all $\xi\in\con(\Rp,\R^{rm})$ with $\xi([0,\infty))\subset K$.
    For arbitrary~$s\in(0,\infty)$ and $\zeta\in\cY_{s}^r$, we have $\chi(\zeta)(t)\in K$ for all $t\in[0,s)$.
    For every element $\zeta\in\cY_{s}^r$ the function $\chi(\zeta)|_{[-\sigma,s)}$ can be smoothly extended to a function~$\tilde{\zeta}\in\con([-\sigma,\infty),\R^m)^r$ 
    with $\tilde{\zeta}(t)\in K$ for all $t\in[0,\infty)$.
    We have $\oT(\tilde{\zeta})(t)\in K_q$ for all $t\in\Rp$ because of the BIBO property of the operator~$\oT$. 
    This implies $\oT(\chi(\zeta))|_{[0,s)}\in K_q$ for all $t\in [0,s)$ and $\zeta\in\cY_{s}^r$ since $\oT$ is causal.
    Since~$f(\cdot)$ and $g(\cdot)^{-1}$ are continuous, the constants $f_{\max}=\max_{x\in K_q}\Norm{f(x)}$ and $g_{\max}=\max_{x\in K_q}\Norm{g(x)^{-1}}$ are well-defined.
    For all $s\in(0,\infty]$ and $\zeta\in\cY_{s}^r$ we have 
    \[
        \fa t\in [0,s):\ \oT(\chi(\zeta))(t)\in K_q,
    \]
    which proves the assertion.
\end{proof}

\begin{lemma}\label{Lemma:Errors}
Under the assumptions of Theorem~\ref{Th:FunnelMPC}, consider the functions $\psi_2,\ldots,\psi_r\in\cG$ defined in~\eqref{eq:psi_i}.
Let $\hat t\ge 0$ and $\zeta\in \con^{r-1}([\hat t,\infty),\R^m)$ be such that $\chi(\zeta)(\hat t)\in\cD_{\hat t}^r$. If  
$\Norm{e_r(\chi(\zeta)(t))}< \psi_r(t)$ for all $t\in[\hat t,s)$ for some $s>\hat t$, then 
$\chi(\zeta)(t)\in\cD_{t}^r$ for all $t\in[\hat t,s)$.
\end{lemma}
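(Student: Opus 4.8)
The plan is to reformulate the claim in terms of the scalar signals $t\mapsto\Norm{\eta_i(t)}$, where $\eta_i(t):=e_i(\chi(\zeta)(t))$ for $i=1,\ldots,r$, and to propagate the funnel bound downwards from $i=r$ to $i=1$ by a cascade (backstepping) comparison argument. By \eqref{eq:ErrorVarDyn} these signals satisfy the recursion $\eta_{i+1}=\dd{t}\eta_i+k_i\eta_i$, i.e.\ $\dd{t}\eta_i=-k_i\eta_i+\eta_{i+1}$, and since $\zeta\in\con^{r-1}$ each $\eta_i$ with $i\le r-1$ is continuously differentiable. Recalling that $\chi(\zeta)(t)\in\cD_t^r$ is equivalent to $\Norm{\eta_i(t)}<\psi_i(t)$ for all $i=1,\ldots,r$, the goal is to show $\Norm{\eta_i(t)}<\psi_i(t)$ on $[\hat t,\tau)$ for every $i$. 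I would prove this by downward induction on $i$; the base case $i=r$ is precisely the hypothesis $\Norm{e_r(\chi(\zeta)(t))}<\psi_r(t)$, and at $t=\hat t$ all the bounds hold because $\chi(\zeta)(\hat t)\in\cD_{\hat t}^r$.

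For the inductive step, assume $\Norm{\eta_{i+1}(t)}<\psi_{i+1}(t)$ on $[\hat t,\tau)$ for some $i\le r-1$. From $\dd{t}\eta_i=-k_i\eta_i+\eta_{i+1}$ and the Cauchy--Schwarz inequality, the upper Dini derivative of the locally Lipschitz map $t\mapsto\Norm{\eta_i(t)}$ obeys $D^+\Norm{\eta_i(t)}\le-k_i\Norm{\eta_i(t)}+\Norm{\eta_{i+1}(t)}$, where at instants with $\eta_i(t)=0$ this follows directly from $\dd{t}\eta_i(t)=\eta_{i+1}(t)$. The crucial ingredient is that each $\psi_i$ is a supersolution of the same filter, i.e.\ $\dd{t}\psi_i(t)+k_i\psi_i(t)\ge\psi_{i+1}(t)$ for all $t\ge t_0$. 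Granting this, the difference $w:=\Norm{\eta_i}-\psi_i$ satisfies $D^+w\le-k_i w$ with $w(\hat t)<0$, so a Gronwall/comparison estimate yields $w(t)\le w(\hat t)\,\me^{-k_i(t-\hat t)}<0$, i.e.\ $\Norm{\eta_i(t)}<\psi_i(t)$ throughout $[\hat t,\tau)$, closing the induction.

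It therefore remains to verify the supersolution inequality $\dd{t}\psi_i+k_i\psi_i\ge\psi_{i+1}$ for $i=1,\ldots,r-1$, and this is where the lower bounds \eqref{eq:cond-k_i} enter. For $2\le i\le r-1$ each $\psi_i$ has the explicit form $\psi_i(t)=C_i\,\me^{-\alpha(t-t_0)}+\tfrac{\beta}{\alpha\gamma^{r-1}}$ with $C_i>0$, so the defect $\dd{t}\psi_i+k_i\psi_i-\psi_{i+1}$ is affine in $\me^{-\alpha(t-t_0)}\in(0,1]$ and it suffices to check its sign at the two endpoints: the limit $t\to\infty$ forces $k_i\ge1$, while the value at $t=t_0$ reduces --- after using $\Norm{e_i^0}\le\Norm{\dot e_{i-1}^0}+k_{i-1}\Norm{e_{i-1}^0}$ from \eqref{eq:ErrorVarDyn} --- to exactly the bound on $k_i$ in \eqref{eq:cond-k_i} (the factor $2$ there even leaving a margin). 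For $i=1$, where $\psi_1=\psi$ is a general element of $\cG$, I would instead use the defining inequality $\dd{t}\psi\ge-\alpha\psi+\beta$ together with the comparison estimate $\psi(t)\ge\tfrac{\beta}{\alpha}+(\psi(t_0)-\tfrac{\beta}{\alpha})\me^{-\alpha(t-t_0)}$ and the bound on $k_1$.

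The main obstacle I anticipate is exactly this last verification: showing that the rather intricate lower bounds \eqref{eq:cond-k_i} are precisely calibrated to make every $\psi_i$ a supersolution of its cascade filter. The backstepping/comparison skeleton is routine, but matching the endpoint conditions to \eqref{eq:cond-k_i} --- in particular handling the general funnel $\psi_1$ and the geometric weights $\gamma^{-(r-i)}$ --- is the delicate part. A minor technical nuisance is the non-differentiability of $t\mapsto\Norm{\eta_i(t)}$ at zeros of $\eta_i$, which I handle by passing to upper Dini derivatives (or, equivalently, by running the comparison on $\Norm{\eta_i}^2$).
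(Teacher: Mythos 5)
Your skeleton---downward induction on $i$ with a Dini-derivative/Gronwall comparison of $\Norm{\eta_i}$ against $\psi_i$---is structurally the same argument as the paper's proof, which picks the largest index $i$ whose funnel is violated and shows $\dd{t}\Norm{\eta_i/\psi_i}^2\le 0$ on the layer where $\e\le\Norm{\eta_i/\psi_i}\le 1$, with $\e=\sqrt{\tfrac12(1+\gamma)}$. Your route in fact needs \emph{less}: the paper's layer argument requires $\tfrac{1+\gamma}{2}\rbl k_i\psi_i+\dot\psi_i\rbr\ge\psi_{i+1}$, which (since $\tfrac{1+\gamma}{2}<1$ and $\psi_{i+1}>0$) is strictly stronger than your supersolution inequality $k_i\psi_i+\dot\psi_i\ge\psi_{i+1}$. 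For $2\le i\le r-1$ your deferred verification does go through exactly as you predict: the defect is affine in $s=\me^{-\alpha(t-t_0)}\in(0,1]$, the endpoint $s\to 0$ needs $k_i\ge 1$, and the endpoint $s=1$ follows from~\eqref{eq:cond-k_i} combined with $\Norm{e_i^0}\le\Norm{\dot e_{i-1}^0}+k_{i-1}\Norm{e_{i-1}^0}$, with room to spare. The Dini-derivative handling of zeros of $\eta_i$ is also fine.

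The genuine gap is the case $i=1$, precisely the step you deferred, and it cannot be closed from the stated hypotheses. Your comparison estimate $\psi(t)\ge\tfrac{\beta}{\alpha}+(\psi(t_0)-\tfrac{\beta}{\alpha})\me^{-\alpha(t-t_0)}$ is the correct one, but its value at $t=t_0$ is only $\psi(t_0)$, and the class $\cG$ allows $\psi(t_0)$ to be far below $\beta/\alpha$ (strongly increasing funnels); then the constant term $\tfrac{\beta}{\alpha\gamma^{r-1}}$ in $\psi_2$ cannot be dominated by $(k_1-\alpha)\psi(t_0)+\beta$ for any $k_1$ satisfying only~\eqref{eq:cond-k_i}. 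Concretely: take $r=2$, $m=1$, $y_{\rf}\equiv 0$, $y^0=(0,0)$, $t_0=\hat t=0$, $\gamma=\tfrac12$, $\alpha=\tfrac{1}{10}$, and $\psi(t)=\tfrac{\beta}{\alpha}\rbl 1-\tfrac{9}{10}\me^{-\alpha t}\rbr$, so that $\psi\in\cG$ with $\dot\psi=-\alpha\psi+\beta$ and $\psi(0)=\beta$. Then~\eqref{eq:cond-k_i} demands $k_1\ge 8.4$; take $k_1=8.4$, whence $\psi_2\equiv\tfrac{\beta}{\alpha\gamma}=20\beta$ by~\eqref{eq:psi_i}. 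Your inequality fails at $t=0$ ($\dot\psi(0)+k_1\psi(0)=9.3\beta<20\beta$), and worse, the lemma itself fails: the function $\zeta$ solving $\dot\zeta=-k_1\zeta+19\beta$, $\zeta(0)=0.99\beta$, satisfies $e_2(\chi(\zeta))\equiv 19\beta<\psi_2$ and $\chi(\zeta)(0)\in\cD_0^2$, yet $\dot\zeta(0)>10\beta$ while $\dot\psi(0)=0.9\beta$, so $\Norm{\zeta}$ crosses $\psi$ immediately after $t=0$. Hence no completion of your induction step (or of any other argument) exists. You should know that the paper's own proof stumbles at exactly this point: it invokes the estimate $\psi(t)\ge\psi(t_0)\me^{-\alpha(t-t_0)}+\tfrac{\beta}{\alpha}$, which is false for every $\psi$ and every $t$ (at $t=t_0$ it asserts $\psi(t_0)\ge\psi(t_0)+\beta/\alpha$), and with the correct estimate the paper's bound on $\psi_2/\psi$ also requires $\psi(t_0)\ge\beta/\alpha$. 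So the ``delicate part'' you anticipated is a genuine hole in the lemma as stated; both your argument and the paper's are repaired by additionally assuming $\psi(t_0)\ge\beta/\alpha$ (automatic for non-increasing funnels) or by strengthening the lower bound on $k_1$ in~\eqref{eq:cond-k_i} by a term of the order $\tfrac{\beta}{\alpha\gamma^{r-1}(1-\gamma)\psi(t_0)}$, after which your (weaker) supersolution inequality holds and your induction closes.
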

\begin{proof}
Seeking a contradiction, we assume that for at least one ${i \in \{1,\ldots,r-1\}}$ there exists $t \in (\hat t,s)$
such that $\Norm{e_i(\chi(\zeta)(t))} \geq \psi_i(t)$. W.l.o.g.\ let~$i$ be the largest index with this property.
In the following we use the shorthand notation $e_i(t):=e_i(\chi(\zeta)(t))$ and $e(t) = e_1(t)$.
However, we like to emphasize that $e_i(0) \neq e_i^0$ (if $e_i$ is defined at~$0$) in general, since $\chi(\zeta)(0)\neq \chi(y^0)(0)$ is possible.
Invoking $\Norm{e_i(\hat t)}<\psi_i(\hat t)$ and continuity of the involved functions, define 
$t^\star := \min \setdef{ t \in [\hat t,s) }{ \Norm{e_i(t) } = \psi_i(t)}$.
Set $\e := \sqrt{\tfrac12 (1+\gamma)} \in (0,1)$.
Due to continuity there exists $t_\star := \max \setdef{ t \in [\hat t,t^\star) }{ \Norm{\tfrac{e_i(t)}{\psi_{i}(t)} } = \e}$, hence we have that $\e\le\Norm{\tfrac{e_i(t)}{\psi_{i}(t)} } \le 1$ for all $t\in[t_\star,t^\star]$.
Utilizing~\eqref{eq:ErrorVarDyn} and omitting the dependency on $t$, we calculate for $t \in [t_{\star},t^\star]$:
\begin{align*}
    \tfrac{1}{2}\dd{t}\Norm{\frac{e_i}{\psi_i}}^2
    &= \al\frac{e_i}{\psi_i},\frac{\dot{e}_i\psi_i-e_i\dot{\psi}_i}{\psi_i^2}\ar\\
    &= \al\frac{e_i}{\psi_i},-\rbl k_i+\frac{\dot{\psi}_i}{\psi_i}\rbr\frac{e_{i}}{\psi_i}+\frac{e_{i+1}}{\psi_i}\ar\\
    &\le -\rbl k_i+\frac{\dot{\psi}_i}{\psi_i}\rbr\Norm{\frac{e_i}{\psi_i}}^2 + \Norm{\frac{e_i}{\psi_i}} \frac{\Norm{e_{i+1}}}{\psi_i}\\
    &\leq -\rbl k_i+\frac{\dot{\psi}_i}{\psi_i}\rbr\e^2 +\frac{\psi_{i+1}}{\psi_i},
\end{align*}
where we used $\Norm{e_{i+1}(t)}\leq \psi_{i+1}(t)$ due to the maximality of $i$.
Now we distinguish the two cases $i=1$ and $i>1$. For $i=1$ we find that $\psi_1 = \psi$ and by properties of~$\cG$ it follows
\[
    -\frac{\dot{\psi}(t)}{\psi(t)}\le \frac{\alpha \psi(t) - \beta}{\psi(t)} \le \alpha.
\]
Furthermore, we have that $\psi(t) \ge \psi(0) \me^{-\alpha t} + \frac{\beta}{\alpha}$ for all $t\ge 0$. Therefore,
\begin{align*}
    &\frac{\psi_2(t)}{\psi(t)}
    \le  \frac{1}{\gamma^{r-1}} \frac{\big(\Norm{\dot e_1^0} + k_1 \Norm{e_1^0}\big) \me^{-\alpha t}}{\psi(0) \me^{-\alpha t} + \frac{\beta}{\alpha}}\\
    &\phantom{\frac{\psi_2(t)}{\psi(t)}= } + \frac{\beta}{\alpha\gamma^{r-1}\big(\psi(0) \me^{-\alpha t} + \frac{\beta}{\alpha}\big)} \\
    &\le\! \frac{1}{\gamma^{r-1}} \frac{\Norm{\dot e_1^0} + k_1 \Norm{e_1^0}}{\psi(0)} + \frac{1}{\gamma^{r-1}}
    \!\le\! \gamma k_1 \!+\! \frac{\Norm{\dot e_1^0}}{\gamma^{r-1} \psi(0)} + \frac{1}{\gamma^{r-1}}
\end{align*}
for all $t\ge 0$, where we have use that $\|e(0)\|\le \gamma^r \psi(0)$. Hence we obtain that
\begin{align*}
     \tfrac{1}{2}\dd{t}\Norm{\frac{e}{\psi}}^2 \!\!\!
     &\le\! -\tfrac12 (k_1 - \alpha) (1+\gamma) + \gamma k_1 + \frac{\Norm{\dot e_1^0}}{\gamma^{r-1} \psi(0)} + \frac{1}{\gamma^{r-1}}\\
     &\le\! -\tfrac12 (1-\gamma) k_1 + \alpha + \frac{\Norm{\dot e_1^0}}{\gamma^{r-1} \psi(0)} + \frac{1}{\gamma^{r-1}} \le 0
\end{align*}
for all $t \in [t_{\star},t^\star]$, where the last inequality follows from~\eqref{eq:cond-k_i}. Now consider the case $i>1$. Then we have
$-\tfrac{\dot{\psi}_i(t)}{\psi_i(t)}\le  \alpha$ for all $t\ge 0$ and, invoking that by~\eqref{eq:ErrorVarDyn}
\[
    \|e_i^0\| \le \Norm{\dot e_{i-1}^0} + k_{i-1} \Norm{e_{i-1}^0},
\]
we find that
\begin{align*}
    \frac{\psi_{i+1}(t)}{\psi_i(t)}\!&=\!  
    \frac{\frac{1}{\gamma^{r-i}}\big(\!\Norm{\dot e_i^0}\!\! +\! k_i\! \Norm{e_i^0}\!\big) \me^{-\alpha t} \!+\! \frac{\beta}{\alpha\gamma^{r-1}}}
    {\frac{1}{\gamma^{r-i+1}}\big(\!\Norm{\dot e_{i-1}^0}\!\! +\! k_{i-1}\! \Norm{e_{i-1}^0}\!\big) \me^{-\alpha t} \!+\! \frac{\beta}{\alpha\gamma^{r-1}}}\\
    &\le\! \gamma \frac{\Norm{\dot e_i^0} + k_i \Norm{e_i^0}}{\Norm{\dot e_{i-1}^0} + k_{i-1} \Norm{e_{i-1}^0} + \frac{\beta}{\alpha \gamma^{i-2}}} + 1\\
    &\le\! \gamma k_i + \gamma \frac{\Norm{\dot e_i^0}}{\Norm{e_i^0}+ \frac{\beta}{\alpha \gamma^{i-2}}} + 1
\end{align*}
for all $t\ge 0$. Hence we obtain that
\begin{align*}
     \tfrac{1}{2}\dd{t}\Norm{\frac{e_i}{\psi_i}}^2 
     \!\!\!&\le\! -\tfrac12 (k_i \!-\! \alpha) (1+\gamma) \!+\! \gamma k_i \!+\! \gamma \frac{\Norm{\dot e_i^0}}{\Norm{e_i^0}\!+\! \frac{\beta}{\alpha \gamma^{i-2}}} + 1\\
     &\le\! -\tfrac12 (1-\gamma) k_i + \alpha + \gamma \frac{\Norm{\dot e_i^0}}{\Norm{e_i^0}+ \frac{\beta}{\alpha \gamma^{i-2}}} + 1 \le 0
\end{align*}
for all $t \in [t_{\star},t^\star]$, where the last inequality follows from~\eqref{eq:cond-k_i}. Summarizing, in each case the contradiction 
\[
    1 \leq \| e_i(t^*)/\psi_i(t^*)\|^2 \leq \| e_i(t_*)/\psi_i(t_*)\|^2 = \e^2<1
\]
arises, which completes the proof.
\end{proof}

For $\hat{t}\geq 0$, $M>0$, $T>0$ and $\zeta\in\cY_{\hat{t}}^r$,
let $\hat{y}:=\zeta|_{I_{\sigma}^{\hat{t},\tau}}$ and $\hat{\oT}:=\oT(\chi(\zeta))|_{I_{0}^{\hat{t},\tau}}$,
where $I_{\sigma}^{\hat{t},\tau}:=[-\sigma,\hat{t}]\cap[\hat{t}-\tau,\hat{t}]$.
We denote by $\cU_{T}(M,\hat{t},\hat{y},\hat{\oT})$ the set
\begin{align}\label{eq:Def-U}
        \setdef{\!\!
        u\!\in\! L^{\infty}([\hat{t},\hat{t}\!+\!T],\R^m)
        \!\!\!}
        {\!\!\!\!
        \begin{array}{l}
            x(t;\hat{t},\hat{y},\hat{\oT},u) - \chi(y_{\rf})(t)\in\cD_{t}^r\\
            \text{for all }t\in [\hat{t},\hat{t}\!+\!T],\SNorm{u}\le M
        \end{array} 
        \!\!\!\!\!}\!.
\end{align}
 This is the set of all $L^\infty$-controls $u$ bounded by $M$ which, if applied to the system~\eqref{eq:Sys},
guarantee that the error signals $e_{i}(x(t;\hat{t},\hat{y},\hat{\oT},u) - \chi(y_{\rf})(t))$ evolve within their respective 
funnels defined by~$\psi_i$ on the interval $[\hat{t},\hat{t}+T]$. 
We note that the conditions in~\eqref{eq:Def-U} implicitly require the solution $x(\cdot;\hat{t},\hat{y},\hat{\oT},u)$ to exist on the interval $[\hat{t},\hat{t}+T]$.

\begin{lemma}\label{Th:BoundM}
    Under the assumptions of Theorem~\ref{Th:FunnelMPC},
    consider the functions $\psi_2,\ldots,\psi_r\in\cG$ defined in~\eqref{eq:psi_i}.
    For $\hat{t}\geq 0$, $\zeta\in\cY_{\hat{t}}^r$,
    let $\hat{y}:=\zeta|_{I_{\sigma}^{\hat{t},\tau}}$ and $\hat{\oT}:=\oT(\chi(\zeta))|_{I_{0}^{\hat{t},\tau}}$.
    Then, there exists $M>0$ such that for all $T>0$ we have
    \[
        \cU_{T}(M,\hat{t},\hat{y},\hat{\oT})\neq\emptyset.
    \]
    Furthermore, 
    \begin{multline}\label{eq:RecursiveControl}
        \fa T_1,T_2>0 \fa u\in \cU_{T_1}(M,\hat{t},\hat{y},\hat{\oT})
        \fa t\in [\hat{t},\hat{t}+T_1]:\\
        \cU_{T_2}(M,t,y(\cdot;\hat{t},\hat{y},\hat{\oT},u)|_{I_{\sigma}^{t,\tau}},\oT(\chi(\tilde{\zeta}))|_{I_{0}^{t,\tau}} )\neq\emptyset,
    \end{multline}
    where $\tilde{\zeta}$ is an arbitrary element in $\cY_t^r$ with $\tilde{\zeta}|_{I_{\sigma}^{t,\tau}}=y(\cdot;\hat{t},\hat{y},\hat{\oT},u)|_{I_{\sigma}^{t,\tau}}$.
\end{lemma}

\begin{proof}
    \emph{Step 1}: We define $M>0$. 
    To that end, define, for $i=1,\ldots, r-1$ and $j=0,\ldots,r-i-1$
    \begin{equation}\label{eq:DefMus}
        \mu_i^0 := \SNorm{\psi_i},\quad \mu_{i}^{j+1}:= \mu_{i+1}^{j}+k_{i}\mu_{i}^{j}.
    \end{equation}
    Using the constants $f_{\max}$ and $g_{\max}$ from \Cref{Lemma:DynamicBounded}, define
    \[
        M:=g_{\max}
        \rbl
            f_{\max}+\SNorm{y_{\rf}^{(r)}}+\sum_{j=1}^{r-1}k_j\mu_{j}^{r-j}+\SNorm{\dot{\psi}_r}
        \rbr.
    \]

    \noindent
    \emph{Step 2}: Let $T>0$ be arbitrary. We construct a control function $u$ and show that $u\in\cU_{T}(M,\hat{t},\hat{y},\hat{\oT})$.
    To this end, for some $u\in L^{\infty}([\hat{t},\hat{t}+T],\R^m)$, we use the shorthand notation $y(t):= y(t;\hat{t},\hat{y},\hat{\oT},u)$, $e(t):= y(t) - y_{\rf}(t)$ and
    $e_i(t):= e_{i}(\chi(y-y_{\rf})(t))$ for $i=1,\ldots,r$.
    The application of the output feedback
    \begin{multline}\label{eq:DefFeedbackControl}
        u(t):=g(\oT(\chi(y))(t))^{-1}
       \rbl 
            -f(\oT(\chi(y))(t))+y^{(r)}_{\rf}(t)\right. \\
            \left.-\sum_{j=1}^{r-1} k_{j}e^{(r-j)}_j(t)+e_r(t)\tfrac{\dot{\psi}_r(t)}{\psi_r(t)}
        \rbr
    \end{multline}
    to the system~\eqref{eq:Sys} leads to a closed-loop system.
    If this initial value problem is considered on the interval~$[\hat{t},\hat{t}+T]$ with initial conditions
    \begin{align*}
      x|_{I_{\sigma}^{\hat{t},\tau}} &= \chi(\hat y),\\
       \oT(x)|_{I_{0}^{\hat{t},\tau}} &= \hat \oT,
    \end{align*}
    then an application of a variant (a straightforward modification tailored to the current context) of~\cite[Thm. B.1]{IlchRyan09}
    yields the existence of a maximal solution~$x:[-\sigma,\omega)\to\R^{rm}$ since 
    $\hat{y}=\zeta|_{I_{\sigma}^{\hat{t},\tau}}$ and $\hat{\oT}=\oT(\chi(\zeta))|_{I_{0}^{\hat{t},\tau}}$ for some $\zeta\in\cY_{\hat{t}}^r$ by assumption.
    If~$x$ is bounded, then $\omega=\infty$, so the solution exists on $[-\sigma, \hat t + T]$.
    Utilizing~\eqref{eq:ErrorVarDyn} one can show that
    \begin{equation}\label{eq:e_r-e_j}
        e_r(t)=e^{(r-1)}(t)+\sum_{j=1}^{r-1}k_je_j^{(r-j-1)}(t).
    \end{equation}
    Omitting the dependency on $t$, we calculate for $t \in [\hat{t},\omega)$:
    \begin{align*}
    &\frac{\dot{e}_r\psi_r-e_r\dot{\psi}_r}{\psi_r}
    =e^{(r)}+\sum_{j=1}^{r-1}k_je_j^{(r-j)}-e_r\frac{\dot{\psi}_r}{\psi_r}\\
    &=\!\!f(\oT(\chi(y)))\!+\!g(\oT(\chi(y)))u\!-\!y_{\rf}^{(r)}\!+\!\sum_{j=1}^{r-1}\!k_je_j^{(r-j)}\!\!-\!e_r\frac{\dot{\psi}_r}{\psi_r}
    \!=\!0.
    \end{align*}
    Therefore,
    \[
    \dd{t}\tfrac{1}{2}\Norm{\frac{e_r}{\psi_r}}^2
    = \al\frac{e_r}{\psi_r},\frac{\dot{e}_r\psi_r-e_r\dot{\psi}_r}{\psi_r^2}\ar
    =0.
    \]
   Since $\Norm{\tfrac{e_r(\hat{t})}{\psi_r(\hat{t})}}<1$ by the assumption $\chi(\hat y - y_{\rf})(\hat{t})\in\cD_{\hat{t}}^r$, this yields $\Norm{\tfrac{e_r(t)}{\psi_r(t)}}<1$ for all $t\in[\hat{t},\omega)$.
   This implies, according to \Cref{Lemma:Errors},  $\chi(y-y_{\rf})(t)\in\cD_{t}^r$ for all $t\in[\hat{t},\omega)$, 
   i.e., $\Norm{e_i(t)}<\psi_i(t)$ for all $i=1,\ldots, r$. Thus, $\Norm{e_i(t)}\leq\mu_i^0$ for all $i=1,\ldots, r$.
   Invoking boundedness of $y_{\rf}^{(i)}$, $i=0,\ldots,r$, and the relation in~\eqref{eq:reps_ei_chi}, we may infer that $x = \chi(y)$ is bounded on $[\hat{t},\omega)$.
   Hence, $\omega=\infty$. 
   Note that, on the interval $[\hat{t},\hat{t}+T]$, the function $\oT(x)(\cdot)$ is solely determined by $\hat{y}$, $\hat{\oT}$, and the differential equation~\eqref{eq:Sys}.
   It does not depend on the values of $x(t)$ prior to $\max\{-\sigma,\hat{t}-\tau\}$ due to the limited memory property of the operator~$\oT$.
   Since  $\hat{y}=\zeta|_{I_{\sigma}^{\hat{t},\tau}}$ and $\hat{\oT}=\oT(\chi(\zeta))|_{I_{0}^{\hat{t},\tau}}$ for some $\zeta\in\cY_{\hat{t}}^r$,
   we thus have $\Norm{f(\oT(\chi(y))(t))}\leq f_{\max}$ and 
   $\Norm{g(\oT(\chi(y))(t))^{-1}}\leq g_{\max}$ for all $t\in[\hat{t},\hat{t}+T]$ according to~\Cref{Lemma:DynamicBounded}.
   Finally, using~\eqref{eq:ErrorVarDyn} and the definition of~$\mu_i^j$ it follows that
   \begin{equation}\label{eq:EiDotBounded}
        \Norm{e^{(j+1)}_i(t)}
        \!\!=\!\!
        \Norm{e^{(j)}_{i+1}(t)\!-\!k_ie^{(j)}_{i}(t)}
        \!\leq\!
        \mu_{i+1}^j\!+k_i\mu^{j}_{i}
        \!=\!\mu^{j+1}_{i}\!
   \end{equation}
   inductively for all $i=1,\ldots, r-1$ and $j=0,\ldots,r-i-1$. Thus, by definition of~$u$ and~$M$ we have $\SNorm{u}\leq M$ and hence $u\in\cU_{T}(M,\hat{t},\hat{y},\hat{\oT})$.
   
   \noindent
   \emph{Step 3}: We show implication~\eqref{eq:RecursiveControl}.
   If, for any $T_1>0$ an arbitrary but fixed control $\hat{u}\in\cU_{T_1}(M,\hat{t},\hat{y},\hat{\oT})$ is applied to the system~\eqref{eq:Sys},
   then $x(t;\hat t,\hat y, \hat{\oT}, u) - \chi(y_{\rf})(t)\in\cD_t^r$ for all $t\in[\hat{t},\hat{t}+T_1]$.
   If for any $\tilde{t}\in[\hat{t},\hat{t}+T]$, the system is considered on the interval 
   $[\tilde{t},\tilde{t}+T_2]$ with $T_2>0$ and initial data $\tilde y := y(\cdot;\hat{t},\hat{y}, \hat{\oT},{u})|_{I_{\sigma}^{\tilde{t},\tau}}$
   and $\tilde{\oT}=\oT(\chi(\tilde{\zeta}))|_{I_{0}^{\tilde{t},\tau}}$ for some $\cY_t^r$ with $\tilde{\zeta}|_{I_{\sigma}^{\tilde{t},\tau}}=y(\cdot;\hat{t},\hat{y},\hat{\oT},u)|_{I_{\sigma}^{\tilde{t},\tau}}$,
   then one can show by a repetition of the arguments in Step~2 that the application 
   of the control~$\tilde u\in L^\infty([\tilde{t},\tilde{t}+T_2],\R^m)$ as in~\eqref{eq:DefFeedbackControl}, \textit{mutatis mutandis}, guarantees 
   $x(t;\tilde t,\tilde  y, \tilde{\oT}, \tilde u) - \chi(y_{\rf})(t)\in\cD_t^r$ for all $t\in[\tilde{t},\tilde{t}+T_2]$.
  Note that, on the interval $[\tilde{t},\tilde{t}+T]$, the function $\oT(x)(\tilde{t})$ is solely determined by $\tilde{y}$, $\tilde{\oT}$, and the differential equation~\eqref{eq:Sys}. 
   It does not depend on the choice of $\tilde{\zeta}$ due to the limited memory property of the operator~$\oT$.
   Since the prerequisites for~\Cref{Lemma:DynamicBounded,Lemma:Errors} are still satisfied, the control $\tilde u$ is bounded by $M$ as constructed in Step~1.
   Thus, $\tilde u\in\cU_{T_2}(M,\tilde t,\tilde y, \tilde{\oT})\neq\emptyset$.
\end{proof}

\begin{lemma} \label{Th:FiniteJImplFunnel}
    Under the assumptions of Theorem~\ref{Th:FunnelMPC},
    consider the functions $\psi_2,\ldots,\psi_r\in\cG$ defined in~\eqref{eq:psi_i}.
    For $\hat{t}\ge 0$, $\zeta\in\cY_{\hat{t}}^r$,
    let $\hat{y}:=\zeta|_{I_{\sigma}^{\hat{t},\tau}}$ and $\hat{\oT}:=\oT(\chi(\zeta))|_{I_{0}^{\hat{t},\tau}}$.
    Further, let $T>0$ $M>0$ such that $\cU_{T}(M,\hat{t},\hat{y}, \hat{\oT})\neq\emptyset$.
    Then, $\cU_{T}(M,\hat{t},\hat{y}, \hat{\oT})$ is equal to the set $\tilde\cU_{T}(M,\hat{t},\hat{y}, \hat{\oT})$ defined by
    \[
    \begin{small}
        \setdef
            {\!\!\!
            u\in L^{\infty}([\hat{t},\hat{t}\!+\!T],\R^m)
            \!\!\!}
            {\!\!\!\!
                 \begin{array}{l}
                x(t;\hat{t},\hat{y}, \hat{\oT},u)\text{ satisfies \eqref{eq:Sys} for all }\\ t\in[\hat{t},\hat{t}\!+\!T],\SNorm{u} \le M,\\
                \int_{\hat{t}}^{\hat{t}+T} \ell_{\psi_r}(t,\zeta(t),u(t))\, {\rm d} t < \infty,\\
                \zeta(t):=x(t; \hat{t}, \hat{y}, \hat{\oT} ,u)-\chi(y_{\rf})(t)
            \end{array}\!\!\!\!\!\!\!
            }\!.
    \end{small}
    \]
\end{lemma}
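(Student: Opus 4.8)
The plan is to prove the claimed set equality by establishing both inclusions $\cU_T(M,\hat t,\hat y)\subseteq\tilde\cU_T(M,\hat t,\hat y)$ and $\tilde\cU_T(M,\hat t,\hat y)\subseteq\cU_T(M,\hat t,\hat y)$. Both sets impose $\SNorm{u}\le M$ and existence of the solution $x(\cdot;\hat t,\hat y,u)$ on $[\hat t,\hat t+T]$, so these requirements transfer for free and only the two remaining conditions — membership of $\zeta(t)=x(t;\hat t,\hat y,u)-\chi(y_{\rf})(t)$ in $\cD_t^r$ versus finiteness of the cost integral — must be compared. Throughout I write $\zeta(t)$ for this error and note that, since $x$ is absolutely continuous and $y_{\rf}\in W^{r,\infty}$, the map $t\mapsto e_r(\zeta(t))$ is continuous on $[\hat t,\hat t+T]$.

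For the inclusion $\cU_T\subseteq\tilde\cU_T$, assume $\zeta(t)\in\cD_t^r$ for all $t\in[\hat t,\hat t+T]$. Then $\Norm{e_r(\zeta(t))}<\psi_r(t)$ holds with strict inequality on a compact interval, so by continuity $\rho:=\min_{t\in[\hat t,\hat t+T]}\big(\psi_r(t)^2-\Norm{e_r(\zeta(t))}^2\big)>0$. Consequently the funnel part of the stage cost is bounded by $\SNorm{\psi_r}^2/\rho$, and adding $\lambda_u\Norm{u}^2\le\lambda_u M^2$ shows the integrand is essentially bounded; integrating over the finite horizon yields a finite cost, i.e.\ $u\in\tilde\cU_T$.

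The substance of the lemma lies in the reverse inclusion. Let $u\in\tilde\cU_T$, so the cost integral is finite. Since $\cU_T(M,\hat t,\hat y)\neq\emptyset$ forces the initial value $\zeta(\hat t)$ into $\cD_{\hat t}^r$, we have $\Norm{e_r(\zeta(\hat t))}<\psi_r(\hat t)$. The plan is to show $\Norm{e_r(\zeta(t))}<\psi_r(t)$ on all of $[\hat t,\hat t+T]$ and then to invoke \Cref{Lemma:Errors}, which upgrades this single inequality to full membership $\zeta(t)\in\cD_t^r$, giving $u\in\cU_T$. Suppose, for contradiction, that the strict inequality fails somewhere; by continuity there is a first crossing $\bar t:=\min\setdef{t\in(\hat t,\hat t+T]}{\Norm{e_r(\zeta(t))}=\psi_r(t)}$, with $\Norm{e_r(\zeta(t))}<\psi_r(t)$ on $[\hat t,\bar t)$. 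Applying \Cref{Lemma:Errors} on $[\hat t,\bar t)$ already yields $\zeta(t)\in\cD_t^r$ there, so the lower derivatives $\dot e,\ldots,e^{(r-1)}$ are bounded and, via the invertible relation in~\eqref{eq:reps_ei_chi} together with boundedness of $\psi_i$ and $y_{\rf}^{(i)}$, the state $\chi(y)(t)$ stays in a fixed compact set. As in \Cref{Lemma:DynamicBounded}, the BIBO property then confines $\oT(\chi(y))(t)$ to a compact set on which $f$ and $g$ are bounded, so $e^{(r)}=f(\oT(\chi(y)))+g(\oT(\chi(y)))u-y_{\rf}^{(r)}$ is bounded using $\SNorm{u}\le M$; since $\dot e_r$ is a fixed linear combination of $\dot e,\ldots,e^{(r)}$, it follows that $\dot e_r$ — and hence $\tfrac{d}{dt}\big(\psi_r^2-\Norm{e_r}^2\big)$ — is bounded on $[\hat t,\bar t)$.

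This derivative bound is the crux: writing $\phi(t):=\psi_r(t)^2-\Norm{e_r(\zeta(t))}^2\ge0$ with $\phi(\bar t)=0$ and $\SNorm{\dot\phi}\le L'$, backward integration from $\bar t$ gives $\phi(t)\le L'(\bar t-t)$, so the denominator vanishes at most linearly. Because $\Norm{e_r(\zeta(t))}^2\to\psi_r(\bar t)^2\ge(\inf_{s}\psi_r(s))^2>0$, the funnel term is bounded below by $c/(\bar t-t)$ for some $c>0$ near $\bar t$, whence $\int_{\hat t}^{\bar t}\ell_{\psi_r}(t,\zeta(t),u(t))\d t=\infty$, the integrand being nonnegative on $[\hat t,\bar t)$. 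This contradicts $u\in\tilde\cU_T$. Hence no crossing occurs, $\Norm{e_r(\zeta(t))}<\psi_r(t)$ throughout, and \Cref{Lemma:Errors} (applied after a harmless $\con^{r-1}$-extension of the error past $\hat t+T$ to cover the right endpoint) delivers $\zeta(t)\in\cD_t^r$ for all $t\in[\hat t,\hat t+T]$, i.e.\ $u\in\cU_T$. I expect the main obstacle to be precisely this step — converting the pointwise blow-up of the integrand at a boundary contact into genuine non-integrability, which is not automatic and relies on the uniform bound on $\dot e_r$ obtained through \Cref{Lemma:Errors,Lemma:DynamicBounded}.
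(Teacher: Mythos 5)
Your proof is correct and follows essentially the same route as the paper's: the same easy inclusion $\cU_T\subseteq\tilde\cU_T$ via a uniform positive lower bound on the denominator, and the same contradiction argument at the first boundary-contact time, using continuity on the compact interval, the BIBO property and $\SNorm{u}\le M$ to bound $\dot e_r$, so that the denominator $\psi_r^2-\Norm{e_r}^2$ is Lipschitz and can vanish at most linearly, forcing the stage cost to be non-integrable. The only difference is that you prove this key non-integrability step inline (backward Lipschitz estimate $\phi(t)\le L'(\bar t-t)$ plus divergence of $\int c/(\bar t-t)\,{\rm d}t$), whereas the paper delegates exactly this step to the cited result \cite[Lem.~4.1]{BergDenn21}, and the paper is slightly more explicit about the Lebesgue-integral technicality that finiteness of $\int\ell_{\psi_r}$ bounds the integral of its positive part, which is what the divergence on $[\hat t,\bar t)$ actually contradicts.
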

\begin{proof}
    We adapt the proof of~\cite[Thm.~4.3]{BergDenn21} to the current setting.
    Given $u\in \cU_{T}(M,\hat{t},\hat{y}, \hat{\oT})$, it follows from the definition of $\cU_{T}(M,\hat{t},\hat{y}, \hat{\oT})$ that
    $\zeta(t):=x(t; \hat{t}, \hat{y}, \hat{\oT} ,u)-\chi(y_{\rf})(t)\in\cD_t^r$ for all $t\in[\hat{t},\hat{t}+T]$.
    Thus,
    \[
        \forall\, t\in[\hat{t},\hat{t}+T]:\ \Norm{e_r(\zeta(t))}<\psi_r(t).
    \]
    We use the shorthand notation 
    $e_r(t):=e_r(\zeta(t))$.
    Due to continuity of  the involved functions, there exists $\e\in(0,1)$ with
    $\Norm{e_r(t)}^2 \le \psi_r(t)^2 - \e$ for all $t \in [\hat{t},\hat{t}+T]$.
    Then, $\ell_{\psi_r}(t,\zeta(t),u(t)) \ge 0$ for all $t \in [\hat{t},\hat{t}+T]$ and
    \begin{align*}
        &       \!\int_{\hat{t}}^{\hat{t}+T}\!        \Abs{ \ell_{\psi_r}(t,\zeta(t),u(t))}\d{t}\\
        &=      \!\int_{\hat{t}}^{\hat{t}+T}\!\Abs{ \frac{\Norm{e_r(t)}^2}{\psi_r(t)^2-\Norm{e_r(t)}^2} + \l_u\Norm{u(t)}^2}\d{t}\\
        &\leq   \!\int_{\hat{t}}^{\hat{t}+T}\!{ \frac{\|\psi_r\|_\infty}{\e} + \l_u\! \SNorm{u}^2}\!\d{t} 
        \le\! \left(\!\frac{\|\psi_r\|_\infty}{\e} + \l_u M^2\!\right)\! T  <  \!    \infty.
    \end{align*}
    Therefore,  $\cU_{T}(M,\hat{t},\hat{y},\hat \oT)$ is contained in $\tilde{\cU}_{T}(M,\hat{t},\hat{y},\hat \oT)$.

    Let $u\in\tilde{\cU}_{T}(M,\hat{t},\hat{y},\hat \oT)$. We show that $\zeta(t)\in\cD_t^r$ for all $t\in[\hat{t},\hat{t}+T]$.
    Since $\cU_{T}(M,\hat{t},\hat{y},\hat \oT)\neq\emptyset$, we have $\chi(\zeta)(\hat t) = \chi(\hat{y}-y_{\rf})(\hat{t})\in\cD_{\hat{t}}^r$.
    According to~\Cref{Lemma:Errors} it suffices to show that $\Norm{e_r(t)}< \psi_r(t)$ for all $t\in[\hat{t},\hat{t}+T]$.
    Assume there exists $t\in[\hat{t},\hat{t}+T]$ with $\Norm{e_r(t)}> \psi_r(t)$. 
    By continuity of the involved functions, there exists 
    \[
        \tilde{t}:=\min\setdef
        {t\in[\hat{t},\hat{t}+T]}
        {\Norm{e_r(t)}=\psi_r(t)}.
    \]
    Recalling the definition of the Lebesgue integral, see e.g.~\cite[Def 11.22]{Rudi76},
    $\int_{\hat{t}}^{\hat{t} + T}\ell_{\psi_r}(t,\zeta(t),u(t))\, {\rm d} t < \infty$ implies
    $\int_{\hat{t}}^{\hat{t} + T}\rbl \ell_{\psi_r}(t,\zeta(t),u(t))\rbr^+ \d t < \infty$, where $\rbl\ell_{\psi_r}(t,\zeta(t),u(t))\rbr^+:=\max\{ \ell_{\psi_r}(t,\zeta(t),u(t)),0\}$.
    Thus,
    \begin{align*}
       &\int_{\hat{t}}^{\tilde t}\!\!    \frac{1}{1-\frac{\Norm{e_r(t)}^2}{\psi_r(t)^2}}            \!\d t = \int_{\hat{t}}^{\tilde t}\!\!    \frac {\Norm{e_r(t)}^2}{\psi_r(t)^2  - \Norm{e_r(t)}^2} + 1           \!\d t \\
       & \leq  \int_{\hat{t}}^{\hat{t}+T}\!\!\!\rbl\frac {\Norm{e_r(t)}^2}{\psi_r(t)^2  - \Norm{e_r(t)}^2}\rbr^{\!\!+}    \d t + T\\
       &\leq   \int_{\hat{t}}^{\hat{t}+T}\!\!\!\rbl\frac {\Norm{e_r(t)}^2}{\psi_r(t)^2  - \Norm{e_r(t)}^2}\rbr^{+}  +\lambda _u\Norm{u}^2\d t + T\\
       &=     \int_{\hat{t}}^{\hat{t}+T}\!\!\!\rbl \ell_{\psi_r}(t,\zeta(t),u(t))\rbr^{+} \!\!\d t + T < \infty.
    \end{align*}
    We seek to apply~\cite[Lem.~4.1]{BergDenn21}.
    To this end, we show that $1-\frac{\Norm{e_r(\cdot)}^2}{\psi_r(\cdot)^2}$ is Lipschitz continuous on $[\hat t, \tilde t]$.
    If a function is bounded, with bounded derivative, then it is Lipschitz continuous.
    Hence, since $\psi_r\in\cG$ it is Lipschitz continuous on $[\hat t, \tilde t]$.
    Clearly, $e_r$ is bounded by $\psi_r$, and we show that $\dot e_r$ is bounded.
    First observe that, since $x(\cdot; \hat{t}, \hat{y},\hat \oT ,u)$ is continuous,
    it is bounded on the compact interval $[\hat{t},\tilde{t}]$.
    Since~$f$ and~$g$ are continuous and by the BIBO property of the operator~$\oT$, $f(\oT(x(\cdot; \hat{t}, \hat{y},\hat \oT ,u)))$ and $g(\oT(x(\cdot; \hat{t}, \hat{y},\hat \oT ,u)))$ are bounded 
    on the interval~$[\hat{t},\tilde{t}]$. As in \eqref{eq:EiDotBounded} in the proof proof of \Cref{Th:BoundM},
    for $e_i(t):= e_i(\zeta)(t)$, we may show that $e_{i}^{(j)}$ is bounded by $\mu_i^j$ as defined in~\eqref{eq:DefMus} for $i=1,\ldots, r$ and $j=0,\ldots r-i-1$.
    Finally, it follows from~\eqref{eq:e_r-e_j} that
    \[
        \dot{e}_r=f(\oT(\chi(y))(\cdot))+g(\oT(\chi(y))(\cdot))u-y_{\rf}^{(r)}
        +\sum_{j=1}^{r-1}k_je_j^{(r-j)},
    \]
    which is bounded on~$[\hat{t},\tilde{t}]$ by the above observations. Since $\psi_r$ and $e_r$ are Lipschitz continuous and products and sums of Lipschitz continuous functions on a compact interval are again Lipschitz continuous, we may infer that $1-\frac{\Norm{e_r(\cdot)}^2}{\psi_r(\cdot)^2}$ is  Lipschitz continuous on $[\hat t, \tilde t]$.
    Now~\cite[Lem.~4.1]{BergDenn21} yields that it is strictly positive, i.e., $\psi_r(t)^2  > \Norm{e_r(t)}^2$ for all $t\in[\hat t, \tilde t]$,
    which contradicts the definition of~$\tilde{t}$.
    Hence,  $\tilde{\cU}_{T}(M,\hat{t},\hat{y},\hat \oT)\subseteq \cU_{T}(M,\hat{t},\hat{y},\hat \oT)$.
\end{proof}

\begin{lemma}\label{Th:ExistenceSolution}
    Under the assumptions of Theorem~\ref{Th:FunnelMPC},
    consider the functions $\psi_2,\ldots,\psi_r\in\cG$ defined in~\eqref{eq:psi_i}.
    For $\hat{t}\ge 0$, $\zeta\in\cY_{\hat{t}}^r$,
    let $\hat{y}:=\zeta|_{I_{\sigma}^{\hat{t},\tau}}$ and $\hat{\oT}:=\oT(\chi(\zeta))|_{I_{0}^{\hat{t},\tau}}$.
    Further, let $T>0$, $M>0$, such that $\cU_{T}(M,\hat{t},\hat{y}, \hat{\oT})\neq \emptyset$.
    Then, there exists $u^\star\in\cU_{T}(M,\hat{t},\hat{y}, \hat{\oT})$ such that $u^\star$ solves the
    OCP~\eqref{eq:FMPC_OCP} for~$\theta = \psi_r$.
\end{lemma}
\begin{proof}
    As a consequence of~\Cref{Th:FiniteJImplFunnel}, solving the OCP~\eqref{eq:FMPC_OCP} is  equivalent to minimizing the function
    \begin{align*}
        J&:L^\infty([\hat{t},\hat{t}+T],\R^m)\to\R\cup\cbl\infty\cbr,\\
    J(u)&:=
    \begin{dcases}
        \int_{\hat{t}}^{\hat{t}+T}\ell_{\psi_r}(t,\zeta(t),u(t))\!\d t,  &\!\! u\in \cU_{T}(M,\hat{t},\hat{y}, \hat{\oT}),\\
        \infty,&\!\!\text{else},
    \end{dcases}
    \end{align*}
    where $\zeta(t):=x(t; \hat{t}, \hat{y}, \hat{\oT} ,u)-\chi(y_{\rf})(t)$.
    For every $u\in \cU_{T}(M,\hat{t},\hat{y}, \hat{\oT})$ we have $\Norm{e_r(\zeta(t))}<\psi_r(t)$    for all $t\in[\hat{t},\hat{t}+T]$, thus $J(u)\geq 0$.
    Hence, the infimum $J^\star:=\inf_{u\in\cU_{T}(M,\hat{t},\hat{y}, \hat{\oT})}J(u)$ exists.
    Let $(u_k)\in\rbl\cU_{T}(M,\hat{t},\hat{y}, \hat{\oT})\rbr^\N$ be a minimizing sequence, meaning $J(u_k)\to J^\star$.
    By definition of~$\cU_{T}(M,\hat{t},\hat{y}, \hat{\oT})$, we have~$\Norm{u_k}\leq M$ for all $k\in\N$.
    Since $L^{\infty}([\hat{t},\hat{t}+T],\R^m)\subset L^{2}([\hat{t},\hat{t}+T],\R^m)$, we conclude that $(u_k)$ 
    is a bounded sequence in the Hilbert space $L^2$, thus $u_k$ converges weakly,
    up to a subsequence, to a function $u^\star\in L^{2}([\hat{t},\hat{t}+T],\R^m)$.
    Let $(x_k):=(x(\cdot;\hat{t},\hat{y}, \hat{\oT},u_k)|_{I_{\sigma}^{\hat{t}+T,\tau}})\in\con(I_{\sigma}^{\hat{t}+T,\tau},\R^{rm})^\N$ be 
    the sequence of associated responses.
    By $u_k\in\cU_{T}(M,\hat{t},\hat{y}, \hat{\oT})$ we have $x_k(t)\in\cD_{t}^r$ for all $t$ in $[\hat{t},\hat{t}+T]$.
    Since the set $\bigcup_{t\in[\hat{t},\hat{t}+T]}\cD_{t}^r$ is compact and independent of $k\in\N$,
    the sequence $(x_k)$ is uniformly bounded.
    A repetition of Steps 2--4 of the proof of~\cite[Thm.~4.6]{BergDenn21} yields that $(x_k)$ has a subsequence (which we do not relabel)
    that converges uniformly to $x^\star=x(\cdot;\hat{t},\hat{y}, \hat{\oT},u^\star)$ and that $\SNorm{u^\star}\leq M$.
    Along the lines of Steps 5--7 of the proof of~\cite[Thm.~4.6]{BergDenn21} it follows that
    $u^\star\in\cU_{T}(M,\hat{t},\hat{y}, \hat{\oT})$ and $J(u^\star)=J^\star$. This completes the proof.
\end{proof}

\subsection{Proof of~\Cref{Th:FunnelMPC}}
    Choosing the bound $M>0$ from~\Cref{Th:BoundM} and utilizing~\Cref{Th:ExistenceSolution}
    this can be shown by a straightforward adaption of the proof  of~\cite[Thm.~2.10]{BergDenn21}. \hfill$\Box$

\section{Simulations}\label{Sec:Sim}
To demonstrate the application of the funnel MPC~\Cref{Algo:FMPC} we consider the example of 
a mass-spring system mounted on a car from~\cite{SeifBlaj13}.
Consider a car with mass~$m_1$, on which a ramp is mounted and inclined by the angle~$\theta\in[0,\tfrac{\pi}{2})$.
On this ramp a mass~$m_2$, which is coupled to the car by spring-damper component 
with spring constant $k>0$ and damping coefficient $d>0$, moves frictionless, see~\Cref{Fig:Mass_on_car}.
\begin{figure}[htp]
    \begin{center}
    \includegraphics[trim=2cm 4cm 5cm 15cm,clip=true,width=6.5cm]{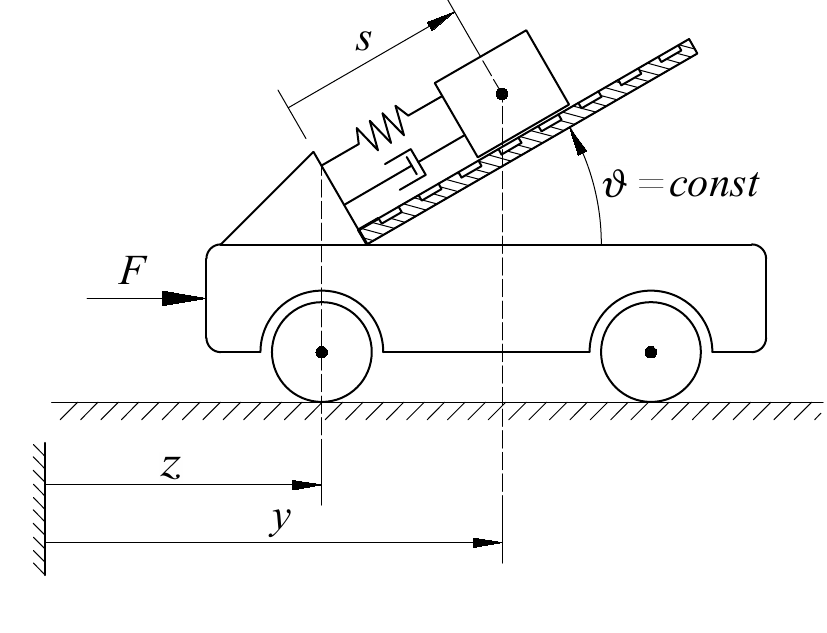}
    \end{center}
    \vspace*{-8mm}
    \caption{Mass-on-car system.}
    \vspace*{-2mm}
    \label{Fig:Mass_on_car}
\end{figure}
A control force $F=u$ can be applied to the car. 
The dynamics of the system can be described by the equations
\begin{align}\label{eq:ExampleMassOnCarSystem}
\begin{small}
    \begin{bmatrix}
        m_1 + m_2& m_2\cos(\vartheta)\\
        m_2 \cos(\vartheta) & m_2
    \end{bmatrix}
    \begin{pmatrix}
        \ddot{z}(t)\\
        \ddot{s}(t)
    \end{pmatrix}
    \!+\!
    \begin{pmatrix}
        0\\
        k s(t) +d\dot{s}(t)
    \end{pmatrix}
    \!=\!
    \begin{pmatrix}
        u(t)\\
        0
    \end{pmatrix},
\end{small}
\end{align}
where $z(t)$ is the horizontal position of the car and $s(t)$  the relative position of the mass on
the ramp at time $t$. The output~$y$ of the system is the horizontal position of the mass on the ramp, given by
\[
    y(t) = z(t) +s(t)\cos(\vartheta).
\]
For the simulation we choose the parameters $m_1=4$, $m_2=1$, $k=2$, $d=1$, $\vartheta=\tfrac{\pi}{4}$, and initial values $z(0)=s(0)=\dot{z}(0)=\dot{s}(0)=0$.
As outlined in~\cite{BergIlch21}, for these parameters the system~\eqref{eq:ExampleMassOnCarSystem} can be written in the form~\eqref{eq:Sys}  with $\sigma=0$,
where $f$ is the identity function on $\R$, $g$ is a non-zero constant function, and the operator $\oT$ has the form $\oT:\con(\Rp,\R^2)\to L_{\loc}^\infty(\Rp,R)$, 
$\oT(y,\dot{y})(t):=R_1y(t)+R_2\dot{y}(t)+S(\me^{Qt}\eta_0+\int_{0}^t\me^{Q(t-s)}Py(s)\d{s})$
for suitable real numbers $R_1$, $R_2$, vectors $S^\top$, $P$, $\eta_0\in\R^2$, and a matrix $Q \in \R^{2\times2}$.
As detailed in~\cite{BergIlch21}, the operator $\oT$ satisfies the causality, local Lipschitz, and bounded-input bounded-output property from Definition~\ref{Def:OperatorClass}.
To see that it also satisfies the limited memory property for $\tau=0$, let $\hat{t}\geq0$  and $(y_1,\dot{y}_1),(y_2,\dot{y}_2)\in\con(\Rp,\R^2)$ 
with $\oT(y_1,\dot{y}_1)(\hat{t})=\oT(y_2,\dot{y}_2)(\hat{t})$ and $(y_1,\dot{y}_1)(t)=(y_2,\dot{y}_2)(t)$ for all $t\geq\hat{t}$.
Using the shorthand notation $V(y,\dot{y})(t):=R_1y(t)+R_2\dot{y}(t)+S\me^{Qt}\eta_0$ and $L(y,\dot{y})(t):=S\int_{0}^t\me^{Q(t-s)}Py(s)\d{s}$, 
we have $\oT(y,\dot{y})(t)=V(y,\dot{y})(t)+L(y,\dot{y})(t)$. It is clear that $V(y_1,\dot{y}_1)(t)=V(y_2,\dot{y}_2)(t)$ for all $t\geq\hat{t}$.
To show the limited memory property for the operator~$\oT$, it therefore is sufficient to show $L(y_1,\dot{y}_1)(t)=L(y_2,\dot{y}_2)(t)$ for all $t\geq\hat{t}$.
For $t\geq \hat{t}$, we have
\begin{align*}
     &L(y_1,\dot{y}_1)(t)=S\int_{0}^t\me^{Q(t-s)}Py_1(s)\d{s}\\
                        &=L(y_1,\dot{y}_1)(\hat t)+S\int_{\hat{t}}^t\me^{Q(t-s)}Py_1(s)\d{s}\\
                        &=\oT(y_1,\dot{y}_1)(\hat{t})-V(y_1,\dot{y}_1)(\hat{t})+S\int_{\hat{t}}^t\me^{Q(t-s)}Py_1(s)\d{s}\\
                        &=\oT(y_2,\dot{y}_2)(\hat{t})-V(y_2,\dot{y}_2)(\hat{t})+S\int_{\hat{t}}^t\me^{Q(t-s)}Py_2(s)\d{s}\\
                        &=L(y_2,\dot{y}_2)(\hat t)+S\int_{\hat{t}}^t\me^{Q(t-s)}Py_2(s)\d{s}\\
                        &=S\int_{0}^t\me^{Q(t-s)}Py_2(s)\d{s}=L(y_2,\dot{y}_2)(t).
\end{align*}%
Thus, the operator~$\oT$ belongs to the class $\cT^{2,1}_{0}$ and hence system~\eqref{eq:ExampleMassOnCarSystem} belongs to the class~$\cN^{1,2}$ for the given parameters.
The control objective is tracking of the reference signal $y_{\rf}(t) = \cos(t)$ within predefined boundaries described by a function~$\psi\in\cG$.
This means that the tracking error $e(t) = y(t)-y_{\rf}(t)$ should satisfy $\Norm{e(t)}< \psi(t)$ for all $t\geq 0$.

We compare the funnel MPC Algorithm~\Cref{Algo:FMPC} with the original funnel MPC scheme from~\cite{BergDenn21},
for which only feasibility for systems with relative degree one has been shown so far, and the funnel MPC scheme from~\cite{BergDenn22}, 
which uses feasibility constraints to ensure recursive feasibility for systems with higher relative degree.
Since, in comparison to the set~$\cG$, the set of admissible funnel functions for control scheme from~\cite{BergDenn22} is quite restrictive, 
the same funnel function~$\psi(t) =1/10 + 11\me^{-27x/20} - 7\me^{-3x/2} $ as in~\cite{BergDenn22} was chosen.
Straightforward calculations show that  $\alpha=1.5$, $\beta=\tfrac{3}{20}$, $\gamma =0.5$, and $k_1=14$ satisfy the requirements of~\Cref{Th:FunnelMPC}.
With these parameters, the funnel function $\theta$ in~\eqref{eq:DefTheta} is given by
\[
    \theta(t) = 28 \me^{-3t/2} + \frac{1}{5}.
\]
For the stage cost function~$\ell_\theta$ as in~\eqref{eq:stageCostFunnelMPC} the parameter~$\lambda_u=\tfrac{1}{100}$ has been chosen.
Further, the maximal input was limited to $\SNorm{u}\leq 20$, i.e., $M=20$ was chosen.
Inserting the definition of the function~$e_2$ from~\eqref{eq:ErrorVar} with parameter~$k_1$, the stage cost thus reads
\[
    \ell_\theta(t,\xi_1,\xi_2,u) = \frac{\Norm{\xi_2+k_1 \xi_1}^2}{\theta(t)^2 
        - \Norm{\xi_2+k_1 \xi_1}^2} + \l_u \Norm{u}^2
\]
for $\Norm{\xi_2+k_1 \xi_1}\neq \theta(t)$. With this and $e(t) = y(t) - y_{\rf}(t)$ the OCP~\eqref{eq:FMPC_OCP} becomes
\[
            \mathop
            {\operatorname{minimize}}_{\mathcal{\substack{u\in L^{\infty}([\hat{t},\hat{t}+T],\R^{m}\!),\\\SNorm{u}\leq M}}} 
            \int_{\hat{t}}^{\hat{t}+T}\!\!\!\ell_\theta(t,e(t),\dot e(t),u(t))\!\d t .
\]
As in~\cite{BergDenn22} and~\cite{BergDenn21}  only step functions with constant step length $0.04$ are considered in the OCP~\eqref{eq:FMPC_OCP} due to discretisation.
The prediction horizon and the time shift are chosen as~$T=0.6$ and $\delta=0.04$.

\begin{figure}[ht] \centering
    \begin{subfigure}[b]{0.48\textwidth}
     \centering
        \includegraphics[width=8.8cm]{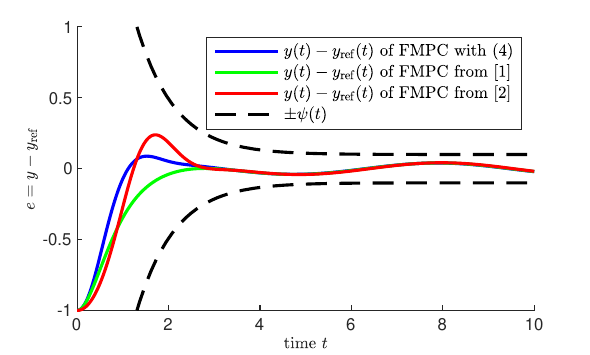}
        \caption{\vspace{-2mm}Tracking error~$e$ and funnel boundary~$\psi$}
        \label{Fig:SimulationOutputError}
    \end{subfigure}
\end{figure}
\begin{figure}[ht]\ContinuedFloat
    \centering
    \begin{subfigure}[b]{0.48\textwidth}
        \centering
        \includegraphics[width=8.8cm]{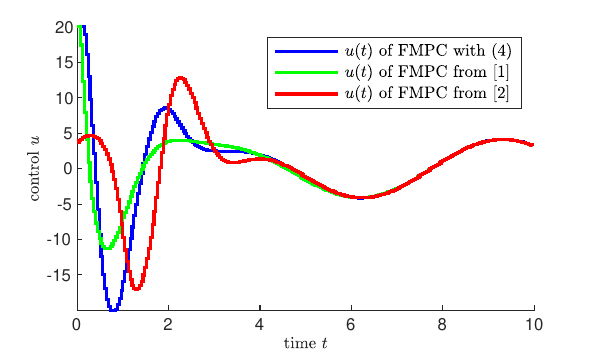}
        \caption{\vspace{-2mm}Control input}
        \label{Fig:SimulationControlInput}
    \end{subfigure}
    \caption{Simulation of system~\eqref{eq:ExampleMassOnCarSystem} under funnel MPC~\Cref{Algo:FMPC} and funnel MPC from~\cite{BergDenn22,BergDenn21}}
    \label{Fig:SimulationFunnelMPC}
    \vspace{-5mm}
\end{figure}
 
All simulations are performed with \textsc{Matlab} and the toolkit \textsc{CasADi} on 
the interval~$[0,10]$ and are depicted in~\Cref{Fig:SimulationFunnelMPC}.
The tracking errors {resulting from the application of the different funnel MPC schemes from~\cite{BergDenn22},~\cite{BergDenn21} and Algorithm~\ref{Algo:FMPC} to system~\eqref{eq:ExampleMassOnCarSystem}  are shown in~\Cref{Fig:SimulationOutputError}.
The corresponding control signals are displayed in~\Cref{Fig:SimulationControlInput}.
It is evident that all three control schemes achieve the control objective,
the evolution of the tracking error with in the performance boundaries given by~$\psi$.

Overall, the performance of all three funnel MPC schemes is comparable.
After $t=4$ the computed control signals and the corresponding tracking errors 
of all three control schemes are almost identical.
However, funnel MPC from~\cite{BergDenn22} requires feasibility constraints in the OCP to achieve initial and recursive feasibility; together with the more complex stage cost, this severely increases the computational effort. Furthermore, the parameters involved in the feasibility constraints are very hard to determine and usually (as in the simulations performed here) conservative estimates must be used. But then again, initial and recursive feasibility cannot be guaranteed. Concerning the funnel MPC scheme from~\cite{BergDenn21},
 it is still an open problem to show that it is initially and recursively feasible for systems with  relative degree larger than one.

 \section{Conclusion}

In the present paper we proposed a new model predictive control algorithm for a class of nonlinear systems with arbitrary relative degree, which achieves tracking of a reference signal with prescribed performance. The new funnel MPC scheme resolves the drawbacks of earlier approaches in~\cite{BergDenn21} (no proof of initial and recursive feasibility for relative degree larger than one) and~\cite{BergDenn22} (requirement of feasibility constraints, design parameters difficult to determine, high computational effort). All advantages of these approaches (no terminal costs or conditions, no requirements on the prediction horizon) are retained. Essentially, this solves the open problems formulated in the conclusions of~\cite{BergDenn22,BergDenn21}. Compared to previous works on funnel MPC, the class of nonlinear systems considered here includes systems with nonlinear delays and infinite-dimensional internal dynamics. An interesting question which remains for future research is, whether the weighted sum of the tracking error derivatives $e, \dot e, \ldots, e^{(r-1)}$ used in the cost functional in~\eqref{eq:FMPC_OCP} can be replaced by a sole error signal~$e$, when instead the prediction horizon~$T$ is chosen sufficiently long.
 
\bibliographystyle{elsarticle-harv}
\bibliography{references}
\end{document}